\documentclass[11pt,reqno]{amsart}
\usepackage{}
\setlength{\oddsidemargin}{0mm}
\setlength{\evensidemargin}{0mm} \setlength{\topmargin}{0mm}
\setlength{\textheight}{220mm} \setlength{\textwidth}{155mm}

\usepackage{bbm}
\usepackage{url}
\usepackage{stmaryrd}
\usepackage{mathrsfs}
\usepackage{cases}
\usepackage{amsfonts}
\usepackage{amssymb}
\usepackage{amsmath}
\usepackage{tikz}
\usepackage{extarrows}
\usepackage{enumerate}
\allowdisplaybreaks[4]
\newtheorem{theorem}{Theorem}[section]
\newtheorem{lemma}[theorem]{Lemma}

\newtheorem{corollary}[theorem]{Corollary}
\theoremstyle{definition}
\newtheorem{definition}[theorem]{Definition}

\numberwithin{equation}{section}

\def\ls{\lesssim}



\let\b=\beta
\let\g=\gamma
\let\d=\delta

\let\la=\lambda
\let\r=\rho

\let\om=\omega

\let\Om=\Omega

\let\ep=\epsilon
\let\va=\varphi

\def\bbR{\mathbb{R}}
\def\bbS{\mathbb{S}}

\def\calO{\mathcal {O}}
\def\tcalO{\widetilde{\calO}}
\def\calV{\mathcal {V}}
\def\calT{\mathcal {T}}

\newcommand{\be}{\begin{equation*}}
\newcommand{\ee}{\end{equation*}}
\newcommand{\ben}{\begin{equation}}
\newcommand{\een}{\end{equation}}
\newcommand{\bn}{\begin{enumerate}}
\newcommand{\en}{\end{enumerate}}
\newcommand{\ba}{\begin{align*}}
\newcommand{\ea}{\end{align*}}
\newcommand{\ban}{\begin{align}}
\newcommand{\ean}{\end{align}}
\newcommand{\bs}{\backslash}

\def\apn{{A_p}}
\def\bmo{{{\rm BMO}(\mathbb R^n)}}
\def\bmoz{{{\rm BMO}_\lambda(\mathbb R^n)}}
\def\cmo{{{\rm CMO}(\mathbb R^n)}}

\def\rn{{{\bbR}^n}}

\begin{document}
\title[On the compactness of oscillation and variation of commutators]
{On the compactness of oscillation and variation of commutators}
\author{WEICHAO GUO}
\address{School of Science, Jimei University, Xiamen, 361021, P.R.China}
\email{weichaoguomath@gmail.com}
\author{YONGMING WEN}
\address{School of Mathematical Sciences, Xiamen University,
Xiamen, 361005, P.R. China} \email{wenyongmingxmu@163.com}
\author{HUOXIONG WU}
\address{School of Mathematical Sciences, Xiamen University,
Xiamen, 361005, P.R. China} \email{huoxwu@xmu.edu.cn}
\author{DONGYONG YANG}
\address{School of Mathematical Sciences, Xiamen University,
Xiamen, 361005, P.R. China} \email{dyyang@xmu.edu.cn}

\begin{abstract}
In this paper, we first establish the weighted compactness result for oscillation and variation associated with
the truncated commutator of singular integral operators. Moreover, we establish a new $\cmo$
characterization via the compactness of oscillation and variation of commutators on weighted Lebesgue spaces.
\end{abstract}
\subjclass[2010]{42B20; 42B25.}
\keywords{compactness, commutator, singular integral, oscillation, variation}
\thanks{Supported by the NSF of China (Nos.11771358, 11471041, 11701112, 11671414,11871254), and the NSF of Fujian Province of China (Nos.2015J01025, 2017J01011).}

\maketitle

\section{Introduction}\label{s1}
The singular integral operator with homogeneous kernel is defined by
\begin{equation}\label{integral operator, linear case}
  T_{\Omega}f(x):=\text{p.v.}\int_{\mathbb{R}^n}\frac{\Omega(x-y)}{|x-y|^{n}}f(y)dy,
\end{equation}
where $\Omega$ is a homogeneous function of degree zero and satisfies the following mean value zero property:
\begin{equation}\label{mean value zero}
  \int_{\mathbb{S}^{n-1}}\Omega(x')d\sigma(x')=0,
\end{equation}
where $d\sigma$ is the spherical measure on the sphere $\mathbb S^{n-1}$.
Given a locally integrable function $b$ and a linear operator $T$, the commutator $[b,T]$
  is defined by
  \be
  T^b(f)(x):=[b,T]f(x):=b(x)T(f)(x)-T(bf)(x)
  \ee
 for suitable functions $f$.
The famous work of Coifman, Rochberg and Weiss \cite{CoifmanRochbergWeiss76AnnMath} gave a characterization of $L^p$-boundedness
of $[b,R_j]$, for every Riesz transform $R_j$.
This result was improved by Uchiyama in his remarkable work \cite{Uchiyama78TohokuMathJ},
in which he showed that the commutator $[b,T_{\Omega}]$ with $\Om\in Lip_{1}(\bbS^{n-1})$
is bounded (compact resp.) on $L^p(\rn)$ if and only if
the symbol $b$ is in $\bmo$ ($\cmo$ resp.),
where $\cmo$ denotes the closure of $C_c^{\infty}(\mathbb{R}^n)$ in the ${\rm BMO}(\mathbb{R}^n)$ topology.
Since then, the work on compactness of commutators of singular and fractional integral operators and its applications to PDE's
have been paid more and more attention; see, for example,
\cite{ChaffeeTorres15PA,ChenHu15CMB, ChenDingWang09PA,ClopCruz13AASFM, Iwaniec92,KrantzLi01JMAAb, Taylor11}
and the references therein.
Recently, inspired by
Lerner--Ombrosi--Rivera-R\'ios \cite{LernerOmbrosiRivera-RiosBLMSta},
the first, third and fourth authors \cite{GuoWuYang17Arxiv}
give some new characterizations of the compact commutators of singular integrals via
${\rm CMO}(\mathbb{R}^n)$.

This paper is devoted to a first contribution to the weighted $L^p$-compactness of the
oscillation and variation of the commutator of singular integral operator.
To state our main results, we first recall some notations.

For a one-parameter family of operators
$\mathcal {W}:=\{W_t\}_{t>0}$, the variation of $\mathcal {W}$ is defined by
\be
\calV_{\r}(\mathcal {W})(f)(x):=\sup_{\ep_{i}\downarrow 0}\left(\sum_{i=1}^{\infty}|W_{\ep_{i+1}}f(x)-W_{\ep_{i}}f(x)|^{\r}\right)^{1/\r},
\ \ \ \ \ (\r>2).
\ee
In general, the boundedness estimate of variation operators can fail when $\r\leq 2$, see the case of martingales in \cite{Qian1998AnnofProb}.

Next, we recall the definition of the oscillation operator of $\mathcal {W}$:
\be
\calO(\mathcal {W}f)(x):=\left(\sum_{i=1}^{\infty}\sup_{t_{i+1}\leq \ep_{i+1}<\ep_i\leq t_i}|W_{\ep_{i+1}}f(x)-W_{\ep_{i}}f(x)|^2\right)^{1/2},
\ee
where $\{t_j\}$ is a decreasing sequence of positive numbers converging to $0$.

The variation inequality was first proved by L\'{e}pingle \cite{Lepingle1976ZWuVG}
for martingales. Then, Bourgain \cite{Bourgain1989IHSPM} proved the variation inequality
for the ergodic averages of a dynamic system.
Since then,
the oscillation and variation have been the active subject of recent research in the field of probability,
ergodic theory and harmonic analysis.
In 2000, Campbell et al. \cite{CampbellJonesReinholdWierdl2000DMJ}
established the $L^p(\rn)$-boundedness of variation for truncated Hilbert transform
and then extended to higher dimensional case in \cite{CampbellJonesReinholdWierdl2003TAMS}.
For the weighted boundedness result one can see \cite{GillespieTorrea2004IJM}, \cite{MaTorreaXu2015JFA}
and \cite{MaTorreaXu2017SCM}.

We say that $T_{K}$ is a Calder\'{o}n-Zygmund operator on $\bbR^n$
if $T_{K}$ is bounded on $L^2(\mathbb{R}^n)$ and it admits the following
representation
\ben\label{representation of singular integral operator}
T_{K}f(x)=\int_{\bbR^n}K(x,y)f(y)dy\ \ \ \text{for all}\ x\notin \text{supp}f
\een
with kernel $K$ satisfying the size condition
\ben\label{conditon of kerenel 1}
|K(x,y)|\leq \frac{C_{K}}{|x-y|^n}
\een
and a smoothness condition
\ben\label{conditon of kerenel 2}
|K(x,y)-K(x',y)|+|K(y,x)-K(y,x')|\leq \frac{C_K}{|x-y|^n}\left(\frac{|x-x'|}{|x-y|}\right)^{\g},
\een
for all $|x-y|>2|x-x'|$, where $C_K>0$, $\g>0$.

\begin{definition}\label{d-bmo}
 The space of functions with bounded mean oscillation, denoted by ${\rm BMO}(\mathbb{R}^n)$, consists of all
$f\in L_{loc}^1(\mathbb{R}^n)$ such that
\be
\|f\|_{\bmo}:=\sup_{Q\subset \mathbb{R}^n}\mathcal{O}(f;Q)<\infty,
\ee
where
\be
f_Q:=\frac{1}{|Q|}\int_{Q}f(y)dy\,\,{\rm and}
\,\,
\mathcal{O}(f;Q):=\frac{1}{|Q|}\int_Q |f(x)-f_Q|dx.
\ee
\end{definition}

The following class of $\apn$ was introduced by Muckenhoupt \cite{Muckenhoupt72TAMS} to
study the weighted norm inequalities of Hardy-Littlewood maximal operators.

\begin{definition}\label{d-Ap weight}
For $1<p<\infty$, the Muckenhoupt class $\apn$ is the set of non-negative locally integrable functions $\om$ such that
\be
[\om]_{A_p}^{1/p}:=\sup_{Q}\left(\frac{1}{|Q|}\int_{Q}\om(x)dx\right)^{1/p}\left(\frac{1}{|Q|}\int_{Q}\om(x)^{1-p'}dx\right)^{1/p'}<\infty,
\ee
where $1/p+1/p'=1$.
\end{definition}

Our main results can be formulated as follows.
\begin{theorem}\label{theorem, compactness}
  Let $1<p<\infty$, $\om\in \apn$ and $b\in \cmo$. We have the following two statements.
  \bn
  \item The $L^p_{\om}(\rn)$-boundedness of $\calO(\calT_{K})$ implies the $L^p_{\om}(\rn)$-compactness of $\calO(\calT_{K}^b)$;
  \item The $L^p_{\om}(\rn)$-boundedness of $\calV_{\r}(\calT_{K})$ implies the $L^p_{\om}(\rn)$-compactness of $\calV_{\r}(\calT_{K}^b)$.
  \en
\end{theorem}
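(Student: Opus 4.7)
The plan is to follow the Uchiyama/Clop--Cruz strategy: reduce to $b\in C_c^\infty(\rn)$ by density in $\cmo$, and then verify the weighted Fr\'{e}chet--Kolmogorov--Riesz criterion for precompactness in $L^p_\om(\rn)$. As a preliminary step, I would observe that the $L^p_\om$ boundedness of $\calO(\calT_K)$ (resp.\ $\calV_\r(\calT_K)$) upgrades, by a standard sharp-maximal or sparse-domination argument applied pointwise to each truncation $T_{K,\ep}^b f(x)=\int_{|x-y|>\ep}(b(x)-b(y))K(x,y)f(y)dy$, to the bound
\[
\|\calO(\calT_K^{b}) f\|_{L^p_\om}+\|\calV_\r(\calT_K^{b}) f\|_{L^p_\om}\ls \|b\|_{\bmo}\|f\|_{L^p_\om}.
\]
Because compactness is stable under operator-norm approximation, it then suffices to prove that $\calO(\calT_K^b)$ and $\calV_\r(\calT_K^b)$ are compact whenever $b\in C_c^\infty(\rn)$ with $\operatorname{supp}b\subset B(0,A)$ for some $A>0$.

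Fix such $b$. For the set $F:=\{\calO(\calT_K^b)f:\|f\|_{L^p_\om}\le 1\}$ (analogously for $\calV_\r$), I would verify the three Fr\'{e}chet--Kolmogorov conditions in $L^p_\om(\rn)$: (i) uniform boundedness of $F$, which is immediate from the boundedness step; (ii) uniform tail decay, i.e.\ $\int_{|x|>R}|\calO(\calT_K^b)f(x)|^p\om(x)dx\to 0$ as $R\to\infty$, uniformly for $\|f\|_{L^p_\om}\le 1$; and (iii) equicontinuity under translations. For (ii), when $|x|>2A$ one has $b(x)=0$, so $T_{K,\ep}^b f(x)=-\int_{|x-y|>\ep}b(y)K(x,y)f(y)dy$; decomposing $f=f\chi_{B(0,2A)}+f\chi_{B(0,2A)^c}$ and using the size condition \eqref{conditon of kerenel 1} together with the absolute integrability of $\sum_i\mathbf{1}_{(\ep_{i+1},\ep_i]}(|x-y|)|K(x,y)|$ on $\{|x-y|>\ep\}$ produces an operator dominated pointwise by $\|b\|_\infty$ times the maximal truncated singular integral $T_K^*$, and the compact support of $b$ forces the decay $\sim |x|^{-n}$ for the nontrivial piece, giving the required uniform tail decay via the weighted boundedness of $T_K^*$.

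The main obstacle is (iii). Here I would exploit the vector-valued viewpoint: writing $\vec{V}f(x)=\{T_{K,\ep_{i+1}}^b f(x)-T_{K,\ep_i}^b f(x)\}_i$, both $\calO$ and $\calV_\r$ are the pointwise norms $\|\vec{V}f(x)\|_E$ for a suitable Banach space $E$ (mixed $\ell^2$--$\ell^\infty$, or $\ell^\r$), so by the reverse triangle inequality
\[
\bigl|\calO(\calT_K^b)f(x+h)-\calO(\calT_K^b)f(x)\bigr|\le \|\vec{V}f(x+h)-\vec{V}f(x)\|_E,
\]
and similarly for $\calV_\r$. The right-hand side is a sublinear operator whose kernel is the difference $(b(x+h)-b(y))K(x+h,y)-(b(x)-b(y))K(x,y)$, and is thus controlled using the Lipschitz continuity of $b$ and the H\"older smoothness \eqref{conditon of kerenel 2} of $K$: splitting $|x-y|\le 2|h|^{1/2}$ vs.\ $|x-y|>2|h|^{1/2}$, the near piece is handled by $|b(x+h)-b(x)|\le \|\nabla b\|_\infty|h|$ and size of $K$, while the far piece gains a factor $|h/(x-y)|^\g$ and a Lipschitz factor from $b$. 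The resulting pointwise bound is majorized by a vector-valued singular integral that is bounded on $L^p_\om$ (by the hypothesized boundedness of $\calO(\calT_K)$ and $\calV_\r(\calT_K)$), times a factor $o(1)$ as $|h|\to 0$. This yields equicontinuity, and the Fr\'{e}chet--Kolmogorov criterion then completes the proof.
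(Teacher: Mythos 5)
Your general scaffold is sound and matches the paper's: reduce to $b\in C_c^\infty(\rn)$ by density, establish $L^p_\om$-boundedness of $\calO(\calT_K^b)$ and $\calV_\r(\calT_K^b)$ (the paper does this via the conjugation method rather than sparse domination, but that is not the issue), and then verify the weighted Fr\'echet--Kolmogorov criterion. Your conditions (i) and (ii) are handled the same way the paper handles them, using the compact support of $b$ and the decay of $\om/|x|^{np}$.

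The genuine gap is in step (iii), and it is precisely the point the authors flag as the main difficulty. After the reverse triangle inequality you say the pointwise increment is controlled by a ``sublinear operator whose kernel is the difference $(b(x+h)-b(y))K(x+h,y)-(b(x)-b(y))K(x,y)$.'' That ignores the fact that each entry of $\vec V f(x)$ is an integral over an \emph{$x$-dependent} truncation domain $\{\ep_{i+1}<|x-y|\le\ep_i\}$, so translating $x\mapsto x+h$ also shifts the domain to $\{\ep_{i+1}<|x+h-y|\le\ep_i\}$. The resulting increment is not a single integral of a kernel difference; it splits into a kernel-difference term over a common domain (your $b$-Lipschitz/$K$-H\"older term, which your far/near split handles) \emph{plus} a term where the same integrand is integrated over the symmetric difference of two annuli $\{\ep_{i+1}<|x+h-y|\le\ep_i\}\triangle\{\ep_{i+1}<|x-y|\le\ep_i\}$. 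That extra term, the $I_1(i)$ (resp.\ $J_1(i)$) piece in the paper, carries no smoothness gain from $b$ or $K$ at all: its smallness comes entirely from the fact that the symmetric difference of the two annuli has Lebesgue measure $\lesssim \ep^{n-1}|h|$. Estimating it in $\ell^2$ (for $\calO$) or $\ell^\r$ (for $\calV_\r$) uniformly in the sequence $\{\ep_i\}$ is the nontrivial part: the paper first replaces $K$ by a smooth truncation $K^\d$ (establishing $\|\calO(\calT_K^b)(f)-\calO(\calT_{K^\d}^b)(f)\|_{L^p_\om}\ls\d\|f\|_{L^p_\om}$) so that the kernel is bounded, and then for $\calO$ uses H\"older with exponent $r\in(1,2)$ chosen so that $\om\in A_{p/r}$ (self-improvement of $A_p$) and the maximal function $M(|f|^r)^{1/r}$, while for $\calV_\r$ it uses the $\ell^\rho$--$\ell^1$ interpolation $(\sum a_i^\rho)^{1/\rho}\le (\sup_i a_i)^{1-1/\rho}(\sum a_i)^{1/\rho}$ together with the smallness of $\om^{1-p'}$-measure of the symmetric-difference sets. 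Your proposal contains none of these ingredients, and without them the equicontinuity step does not close.

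A secondary but related omission: you work directly with $K$ rather than with a mollified kernel $K^\d$. Besides simplifying the near-diagonal estimates, the $K^\d$ truncation is essential for the domain-shift term, since the bound for that term degrades like $\d^{-n}$ or $\d^{-1/r'}$ and must be balanced against the smallness in $|h|$; without introducing $\d$ first (and using the $O(\d)$ approximation to pass back) it is unclear how to get a uniform-in-$\{\ep_i\}$ estimate as $\ep_i\downarrow 0$.
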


In order to establish the necessity and
equivalent characterization of compact oscillation operator, we define
the modified oscillation by
\be
\tcalO(\mathcal {W}f)(x):=\left(\sum_{i=1}^{\infty}\sup_{t_{i+1}\leq \ep_{i+1}<\ep_i\leq t_i}|W_{\ep_{i+1}}f(x)-W_{\ep_{i}}f(x)|^2\right)^{1/2}
+|W_{t_1}f(x)|.
\ee
This variant of oscillation is necessary for the following Theorem \ref{theorem, necessity of cp}
and Corollary \ref{corollary, characterization-cp}, since one can choose a function $b\notin \bmo$ such
that $\calO(\calT_{\Omega}^b)$ in Theorem \ref{theorem, necessity of cp} and Corollary \ref{corollary, characterization-cp}
is a compact operator on $L^p_{\om}(\rn)$.
We put the details in Appendix A.

\begin{theorem}\label{theorem, necessity of cp}
  Let $1<p<\infty$, $b\in L_{loc}^1(\mathbb{R}^n)$ and $\om\in \apn$.
  Let $\Om$ be a bounded measurable function on $\bbS^{n-1}$, which does not change sign and is not equivalent to zero
  on some open subset of  $\bbS^{n-1}$.
  Then we have the following two statements.
  \bn
  \item Let $\{t_j\}_{j=1}^{\infty}$ be a sequence with $\sup_{i\in \mathbb{Z}}|\{j: 2^i\leq |t_j|<2^{i+1}\}|<\infty$.
  Then the $L^p_{\om}(\rn)$-compactness of
  $\tcalO(\calT_{\Om}^b)$ implies $b\in \cmo$;
  \item The $L^p_{\om}(\rn)$-compactness of
  $\calV_{\r}(\calT_{\Om}^b)$ implies $b\in \cmo$.
  \en
\end{theorem}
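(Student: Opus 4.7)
The plan is to follow Uchiyama's classical contradiction scheme combined with the weighted Fr\'echet--Kolmogorov characterization of precompactness in $L^p_\om(\rn)$ for $\om\in\apn$ (see \cite{ClopCruz13AASFM}). Compactness implies boundedness, so a standard duality/test-function argument first yields $b\in\bmo$. By Uchiyama's characterization, $b\in\cmo$ is then equivalent to the three vanishing oscillation conditions
\be
\lim_{r(Q)\to 0}\calO(b;Q)=\lim_{r(Q)\to\infty}\calO(b;Q)=\lim_{|c_Q|\to\infty,\,r(Q)=1}\calO(b;Q)=0,
\ee
so it suffices to rule out the existence of $\delta>0$ and a sequence of balls $\{B_k\}$ degenerating in one of these three ways and satisfying $\calO(b;B_k)\ge\delta$.

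Given such a sequence, I take the normalized test function $f_k:=\om(B_k)^{-1/p}\,\mathrm{sgn}(b-b_{B_k})\,\chi_{B_k}$, so $\|f_k\|_{L^p_\om}\le 1$. The hypothesis on $\Om$ guarantees an open cap $V\subset\bbS^{n-1}$ and $c_0>0$ such that $\Om$ keeps a constant sign on $V$ and $|\Om|\ge c_0$ on a subset of positive measure of $V$. I place a shifted ball $\tF_k$ of radius $r(B_k)$, at distance $\sim r(B_k)$ from $B_k$ in the direction encoded by $V$, arranged so that $(z-y)/|z-y|\in V$ and $|z-y|\sim r(B_k)$ for every $z\in\tF_k$ and $y\in B_k$. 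Choosing truncation scales $\ep_{i+1}<\ep_i$ that bracket $\mathrm{diam}(B_k\cup\tF_k)$, the resulting increment of $\calT_\Om^b f_k$ at $z\in\tF_k$ equals
\be
\int_{\ep_{i+1}<|z-y|\le\ep_i}\frac{\Om((z-y)/|z-y|)}{|z-y|^n}(b(z)-b(y))f_k(y)\,dy,
\ee
and, since $\Om$ keeps constant sign on $V$ and $(b(y)-b_{B_k})f_k(y)\ge 0$ by construction, a careful analysis (discarding a subset of $\tF_k$ of small $\om$-measure where $|b-b_{B_k}|$ is large, via $b\in\bmo$) extracts the pointwise lower bound
\be
\calV_\r(\calT_\Om^b)f_k(z),\ \tcalO(\calT_\Om^b)f_k(z)\gtrsim \calO(b;B_k)\,\om(B_k)^{-1/p}\qquad\text{for } z\in\tF_k'\subset\tF_k,
\ee
with $\om(\tF_k')\gtrsim\om(\tF_k)\sim\om(B_k)$ by the $\apn$ doubling.

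Integrating yields $\|\tcalO(\calT_\Om^b)f_k\|_{L^p_\om(\tF_k)}+\|\calV_\r(\calT_\Om^b)f_k\|_{L^p_\om(\tF_k)}\gtrsim\delta$ uniformly in $k$. When $r(B_k)\to\infty$ or $|c_{B_k}|\to\infty$, the supports $\tF_k$ escape to infinity and the mass of the image sequence fails to concentrate in any fixed ball, violating uniform tightness. When $r(B_k)\to 0$, I construct a translation $h_k\in\rn$ of size $\sim r(B_k)$ that separates $\tF_k$ from $\tF_k+h_k$, and conclude that $\|\tau_{h_k}\tcalO(\calT_\Om^b)f_k-\tcalO(\calT_\Om^b)f_k\|_{L^p_\om}\gtrsim\delta$, violating uniform equicontinuity. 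In either situation the image $\{\tcalO(\calT_\Om^b)f_k\}$ or $\{\calV_\r(\calT_\Om^b)f_k\}$ cannot be relatively compact in $L^p_\om(\rn)$, giving the desired contradiction.

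The hard part is executing the pointwise lower bound without any smoothness of $\Om$: all estimates must exploit only the sign information and the constraint $|z-y|\sim r(B_k)$, so the positioning of $\tF_k$ and the choice of $\{\ep_i\}$ must be engineered carefully, and the BMO deficit has to be extracted from $\int_{B_k}|b-b_{B_k}|\,dy$ without losing the uniform constant. A secondary subtlety explains the role of $\tcalO$ versus $\calO$: the sum of supremum-increments in $\calO$ is insensitive to the outermost scale, and Appendix~A exhibits some $b\notin\bmo$ still giving $\calO(\calT_\Om^b)$ compact, so the extra term $|W_{t_1}f|$ in $\tcalO$ is needed to pin down the commutator at the coarsest available scale, which is essential for the lower bound when $r(B_k)\to\infty$ or $|c_{B_k}|\to\infty$. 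The regularity hypothesis $\sup_i|\{j:2^i\le|t_j|<2^{i+1}\}|<\infty$ then guarantees that an index $j$ with $t_j\sim r(B_k)$ is always available to drive the oscillation construction; for the variation, the free choice of $\{\ep_i\}$ obviates this restriction.
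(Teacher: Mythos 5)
Your overall strategy — Uchiyama-style contradiction via a CMO characterization through vanishing oscillations, specially placed test functions, pointwise lower bounds exploiting the sign hypothesis on $\Omega$, and violation of the weighted Fr\'echet--Kolmogorov conditions — matches the paper's, which implements the same scheme by citing Lemma~\ref{lemma, new cmo} together with the lower and upper estimates of Lemmas~\ref{lemma, l-es}--\ref{lemma, u-es} from \cite{GuoWuYang17Arxiv}, and then establishing Claims~A and~B. (A minor difference: the paper works with the John--Str\"omberg local mean oscillation $a_\lambda(b;Q)$ rather than the classical $\mathcal O(b;Q)$, which is technically more convenient for the lower estimate but not essential.)

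However, there is a real gap in how you execute the lower bound for $\tcalO$. For the modified oscillation the truncation scales are \emph{not} free: one must pick $\epsilon_{i+1}<\epsilon_i$ with $t_{i+1}\le\epsilon_{i+1}<\epsilon_i\le t_i$, so ``choosing truncation scales $\epsilon_{i+1}<\epsilon_i$ that bracket $\mathrm{diam}(B_k\cup\tF_k)$'' only reconstitutes the full commutator integral if that diameter range lies inside a single block $[t_{i+1},t_i]$, which need not happen. Your stated rationale for the hypothesis on $\{t_j\}$ — that ``an index $j$ with $t_j\sim r(B_k)$ is always available'' — is also wrong: the sequence may skip any given dyadic scale entirely. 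The actual role of the hypothesis, as exploited in the paper's Claim~A, is different: the distances $|x-y|$ for $x\in E$, $y\in F$ lie in a fixed interval $\sim k_0 l_Q$ of bounded dyadic thickness, so the condition $\sup_i|\{j:2^i\le t_j<2^{i+1}\}|<\infty$ ensures that only \emph{finitely many} indices $i$ contribute; since the integrand $(b(x)-b(y))\Omega(x-y)|x-y|^{-n}f(y)$ keeps a fixed sign on $E\times F$, the $\ell^2$-sum of those finitely many increments is then comparable to their $\ell^1$-sum, which is exactly the full commutator integral, and the extra term $|W_{t_1}f|$ handles the degenerate case $k_0 l_Q>t_1$ where every oscillation block misses the relevant scales. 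Without this finite-sum comparison your pointwise lower bound for $\tcalO$ is unjustified. For $\calV_\rho$ the free choice of $\{\epsilon_i\}$ does permit your single-annulus bracketing, and indeed no constraint on $\{t_j\}$ is needed, exactly as you observe; moreover, for the variation the paper squeezes $\calV_\rho(\calT_\Omega^b)(f)(x)$ between $T_\Omega^b f(x)$ from both sides rather than just from below, which gives equality on $E$ and allows a direct transfer of Lemma~\ref{lemma, l-es}.
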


\begin{corollary}\label{corollary, characterization-cp}
  Let $1<p<\infty$, $b\in L_{loc}^1(\mathbb{R}^n)$, $\om\in \apn$ and $\Om\in Lip_1(\bbS^{n-1})$ with $\Omega\not\equiv0$. Then
  \bn
  \item $b\in \cmo\Longleftrightarrow \tcalO(\calT_{\Om}^b)$ is compact on $L^p_{\om}(\rn)$;
  \item $b\in \cmo\Longleftrightarrow \calV_{\r}(\calT_{\Om}^b)$ is compact on $L^p_{\om}(\rn)$.
  \en
\end{corollary}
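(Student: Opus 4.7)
The plan is to deduce Corollary~\ref{corollary, characterization-cp} by combining the two main results: Theorem~\ref{theorem, compactness} supplies the ``$b\in\cmo\Longrightarrow$ compactness'' directions and Theorem~\ref{theorem, necessity of cp} supplies the converses. The task then reduces to verifying that the hypotheses of both theorems are met whenever $\Om\in Lip_1(\bbS^{n-1})$ with $\Om\not\equiv 0$.

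For the forward direction I first check that $T_\Om$ is a Calder\'on--Zygmund operator in the sense of \eqref{representation of singular integral operator}--\eqref{conditon of kerenel 2}. The kernel $K(x,y)=\Om((x-y)/|x-y|)/|x-y|^n$ satisfies \eqref{conditon of kerenel 1} with $C_K=\|\Om\|_\infty$, and combining the Lipschitz regularity of $\Om$ on $\bbS^{n-1}$ with the elementary bound
\[
\left|\frac{x-y}{|x-y|}-\frac{x'-y}{|x'-y|}\right|\ls\frac{|x-x'|}{|x-y|}\qquad\text{for }|x-y|>2|x-x'|
\]
produces \eqref{conditon of kerenel 2} with $\g=1$. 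Next, the $L^p_\om(\rn)$-boundedness of $\calO(\calT_\Om)$ and $\calV_\r(\calT_\Om)$ for $\om\in\apn$ is available in \cite{GillespieTorrea2004IJM, MaTorreaXu2015JFA, MaTorreaXu2017SCM}. Invoking Theorem~\ref{theorem, compactness} therefore yields the $L^p_\om(\rn)$-compactness of $\calO(\calT_\Om^b)$ and $\calV_\r(\calT_\Om^b)$, which already establishes the ``$\Longleftarrow$'' of~(2). For the ``$\Longleftarrow$'' of~(1) I still have to control the extra term $|[b,T_\Om^{t_1}]f|$ that enters $\tcalO$; this single-scale commutator is compact on $L^p_\om(\rn)$ for $b\in\cmo$ by a weighted Uchiyama-type approximation (take $b_k\in C_c^\infty(\rn)$ with $b_k\to b$ in $\bmo$, observe that the kernel of $[b_k,T_\Om^{t_1}]$ gains an extra decay factor from $\na b_k$, and pass to the limit using uniform weighted bounds), so the pointwise majorization $\tcalO(\calT_\Om^b)f\le\calO(\calT_\Om^b)f+|[b,T_\Om^{t_1}]f|$ transfers compactness.

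For the converse direction I apply Theorem~\ref{theorem, necessity of cp}. Since $\Om\in Lip_1(\bbS^{n-1})$ is continuous and $\Om\not\equiv 0$, there exists $x_0'\in\bbS^{n-1}$ with $\Om(x_0')\ne 0$, and by continuity $\Om$ keeps a fixed nonzero sign on some open neighborhood of $x_0'$ in $\bbS^{n-1}$. Thus $\Om$ meets the structural hypothesis of Theorem~\ref{theorem, necessity of cp}, while the lacunary condition $\sup_{i\in\mathbb{Z}}|\{j:2^i\le|t_j|<2^{i+1}\}|<\infty$ in part~(1) is assumed as part of the specification of $\tcalO$. Applying the two items of Theorem~\ref{theorem, necessity of cp} then gives $b\in\cmo$ in both cases.

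The principal obstacle is the extra $|W_{t_1}f|$ term built into $\tcalO$ but absent from Theorem~\ref{theorem, compactness}: one needs the weighted Uchiyama compactness of $[b,T_\Om^{t_1}]$ for the single-scale truncation, a classical-flavored ingredient that is not explicitly covered by the main theorems. All remaining steps amount to hypothesis-checking and direct appeals to Theorems~\ref{theorem, compactness} and \ref{theorem, necessity of cp}.
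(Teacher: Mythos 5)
Your proposal is correct and follows essentially the same route as the paper: verify that $T_\Om$ with $\Om\in Lip_1(\bbS^{n-1})$ satisfies \eqref{conditon of kerenel 1}--\eqref{conditon of kerenel 2}, import the weighted $L^p_\om$-boundedness of $\calO(\calT_\Om)$ and $\calV_\r(\calT_\Om)$ from \cite{MaTorreaXu2017SCM}, invoke Theorem~\ref{theorem, compactness} for sufficiency, handle the extra single-scale term $T_{\Om,t_1}^b$ in $\tcalO$ separately, and get necessity from Theorem~\ref{theorem, necessity of cp} after checking the sign condition on $\Om$. The paper likewise reduces to the compactness of $T_{\Om,t_1}^b$ (established ``by a similar argument as in the proof of (2) of Theorem~\ref{theorem, compactness}'') and otherwise cites the same ingredients. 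One small imprecision worth fixing: you write the transfer step as a majorization $\tcalO(\calT_\Om^b)f\le\calO(\calT_\Om^b)f+|T_{\Om,t_1}^bf|$, but for sublinear mappings pointwise domination by a compact operator does not by itself yield compactness; what you actually have, and what you should invoke, is the identity $\tcalO(\calT_\Om^b)f=\calO(\calT_\Om^b)f+|T_{\Om,t_1}^bf|$ coming straight from the definition of $\tcalO$, so that the image set is contained in the Minkowski sum of two precompact subsets of $L^p_\om(\rn)$ and is hence precompact.
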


This paper is organized as follows.
Section 2 is devoted to the proof of the sufficiency of compactness, i.e., Theorem \ref{theorem, compactness}.
It is well known that the Fr\'echet-Kolmogorov theorem
is a powerful tool in the study of compactness of commutators of singular integral operators,
see, for example, \cite{Uchiyama78TohokuMathJ}.
In the proof of Theorem \ref{theorem, compactness}, we also
use the weighted Fr\'echet-Kolmogorov theorem (see Lemma \ref{lemma, criterion of precompact})
to prove the compactness of $\calO(\calT_{K}^b)$ and $\calV_{\r}(\calT_{K}^b)$.
However, due to the special structures of oscillation and variation,
the argument here is more complicated. 
Moreover, compared to the known case of singular integral operators,
the regularity of commutator of oscillation or variation of a singular integral operator
comes from not only the regularity of symbol $b$ and the kernel $K$,
but also the smallness of corresponding measurable sets degenerated by the annuluses in the definition of oscillation or variation. 

The necessity conditions of compactness will be dealt with in Section 3.
By establishing two claims A and B, we reduce our cases to the known cases in \cite{GuoWuYang17Arxiv}.
Then, Theorem \ref{theorem, necessity of cp} can be proved.
Appendix A is used to clarify the reasonableness  of the modified oscillation
in our results for the necessity.

We remark that all conclusions of this article can de extended to the high order commutator case (oscillation and variation of high order commutators) as in \cite{GuoWuYang17Arxiv}.
We omit such more complicated expression form just for concise, and leave the details for the interested readers.

Throughout this paper, we will adopt the following notations. Let $C$ be a
positive constant which is independent of the main parameters.
The notation $X\lesssim Y$ denotes the
statement that $X\leq CY$, $X\sim Y$ means $X\lesssim Y\lesssim X$. For a
given cube $Q$, we use $c_Q$, $l_Q$ and $\chi_Q$
to denote the center, side length and characteristic function of $Q$. We also denote $(A\setminus B)\cup (B\setminus A)$ by $A\triangle B$.
For any point $x_0\in\rn$ and sets $E, F\subset\rn$,
$E+x_0:=\{y+x_0: y\in E\}$ and $E-F:=\{x-y: x\in E, y\in F\}$.

\section{Compactness of the oscillation operators}
In this part, we study the compactness property of oscillation and variation.
Thanks to the so called conjugation method (see, for example, \cite{PerezRivera16arxiv}) and John-Nirenberg inequality, the boundedness of
$\calO(\calT_{K}^b)$ and $\calV_{\r}(\calT_{K}^b)$ can be deduced by the weighted boundedness of
$\calO(\calT_{K})$ and $\calV_{\r}(\calT_{K})$ respectively. Precisely, we have the following lemma.

\begin{lemma}\label{lemma, bd}
  Let $1<p<\infty$, $\om\in \apn$ and $b\in \bmo$. We have the following two statements.
  \bn
  \item The $L^p_{\om}(\rn)$-boundedness of $\calV_{\r}(\calT_{K})$ implies the $L^p_{\om}(\rn)$-boundedness of $\calV_{\r}(\calT_{K}^b)$
  with
  $$\|\calV_{\r}(\calT_{K}^b)\|_{L^p_{\om}(\rn)\rightarrow L^p_{\om}(\rn)}\lesssim \|b\|_{\bmo}.$$
  \item The $L^p_{\om}(\rn)$-boundedness of $\calO(\calT_{K})$ implies the $L^p_{\om}(\rn)$-boundedness of $\calO(\calT_{K}^b)$ with
  $$\|\calO(\calT_{K}^b)\|_{L^p_{\om}(\rn)\rightarrow L^p_{\om}(\rn)}\lesssim \|b\|_{\bmo}.$$
 \en
\end{lemma}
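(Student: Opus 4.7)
The plan is to deploy the conjugation method of Chung--Pereyra--P\'erez, which, combined with the John--Nirenberg inequality, reduces the weighted bound for the commutator to the weighted bound for the underlying operator tested against a one-parameter family of $\apn$-weights. For each truncation parameter $\ep>0$, introduce the entire function
$F_\ep(z,x):=e^{zb(x)}T_{K,\ep}(e^{-zb}f)(x)$, so that $F_\ep'(0,x)=[b,T_{K,\ep}]f(x)$ and Cauchy's formula gives
\be
T_{K,\ep}^{b}f(x)=\f{1}{2\pi i}\oint_{|z|=r}\f{F_\ep(z,x)}{z^2}\,dz\qquad(r>0).
\ee

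Forming the differences $T_{K,\ep_{i+1}}^{b}f-T_{K,\ep_{i}}^{b}f$ and applying Minkowski's inequality twice --- once to push the $\ell^\r$-sum (resp.\ the $\ell^2$-sum together with the supremum over each band $[t_{i+1},t_i]$) past the contour integral, and once to pull the outer supremum over the sequences $\{\ep_i\}$ past it --- produces the pointwise bound
\be
\calV_\r(\calT_K^{b})f(x)\ls\oint_{|z|=r}\f{|e^{zb(x)}|}{|z|^2}\,\calV_\r(\calT_K)(e^{-zb}f)(x)\,|dz|,
\ee
and its analogue with $\calV_\r$ replaced by $\calO$. Taking the $L^p_\om(\rn)$-norm and invoking Minkowski one more time moves the contour integral outside and reduces the task to estimating $\|\,|e^{zb}|\,\calV_\r(\calT_K)(e^{-zb}f)\|_{L^p_\om(\rn)}=\|\calV_\r(\calT_K)(e^{-zb}f)\|_{L^p_{\om_z}(\rn)}$, where $\om_z:=|e^{zb}|^p\om=e^{p\,\mathrm{Re}(z)\,b}\,\om$. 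John--Nirenberg then supplies a constant $c_0=c_0([\om]_{\apn},n,p)$ such that $[\om_z]_{\apn}\ls[\om]_{\apn}$ uniformly for $|z|\le c_0/\|b\|_\bmo$; applying the hypothesis (understood as $L^p_{\om'}$-boundedness for every $\om'\in\apn$, as is standard in this area) gives $\|\calV_\r(\calT_K)(e^{-zb}f)\|_{L^p_{\om_z}}\ls\|f\|_{L^p_\om}$, since $|e^{-zb}|^p\om_z=\om$. Selecting $r:=c_0/\|b\|_\bmo$ and estimating $\oint|dz|/|z|^2=2\pi/r$ produces the claimed factor of $\|b\|_\bmo$.

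The main obstacle is the John--Nirenberg step: one must guarantee the uniform $\apn$-character of the perturbed weights $\om_z$ as $z$ traverses the contour, and it is precisely this constraint that dictates the radius $r\sim1/\|b\|_\bmo$ and produces the linear dependence on $\|b\|_\bmo$. A secondary technical point is to verify that the Minkowski manipulations are legitimate for $\calO$: the inner supremum over each interval $[t_{i+1},t_i]$ and the outer $\ell^2$-sum must both commute past the Cauchy integral, which they do by sublinearity, although the bookkeeping is a shade more involved than for the clean $\ell^\r$-norm that appears in $\calV_\r$.
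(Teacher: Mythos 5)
Your proof is correct and follows exactly the approach the paper has in mind: the conjugation method (Cauchy's integral formula applied to $z\mapsto e^{zb}T_{K,\epsilon}(e^{-zb}f)$) combined with the John--Nirenberg inequality to control $[\omega_z]_{A_p}$ uniformly on a disc of radius $\sim 1/\|b\|_{\bmo}$. The paper does not write this out, instead citing \cite[Theorem~1.1]{ChenDingHongLiu2017Arxiv} for (1) and declaring (2) similar, but that reference proceeds precisely as you do; you also correctly flag the one reading subtlety, namely that the hypothesis must be interpreted as weighted boundedness over the whole $A_p$ family so that the perturbed weights $\omega_z$ are admissible.
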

The conclusion (1) of Lemma \ref{lemma, bd} was proved in \cite[Theorem 1.1]{ChenDingHongLiu2017Arxiv}.
Since the proof of (2) in Lemma \ref{lemma, bd} is similar, we omit the details here.

Now, we turn to the proof of Theorem \ref{theorem, compactness}.
We recall the weighted Fr\'echet-Kolmogorov theorem \cite{ClopCruz13AASFM} as follows.
\begin{lemma}\label{lemma, criterion of precompact}
  Let $p\in (1,\infty)$ and $\om\in A_p$. A subset $E$ of $L^p_{\om}(\rn)$ is precompact (or totally bounded)
  if the following statements hold:
  \bn[(a)]
  \item $E$ is bounded, i.e., $sup_{f\in E}\|f\|_{L_{\omega}^p(\mathbb{R}^n)}\lesssim 1$;
  \item $E$ uniformly vanishes at infinity, that is,
  \be
  \lim_{N\rightarrow \infty}\int_{|x|>N}|f(x)|^p\om(x)dx=0,\ \text{uniformly for all}\ f\in E.
  \ee
  \item $E$ is uniformly equicontinuous, that is,
  \be
  \lim_{\r\rightarrow 0}\sup_{y\in B(0,\r)}\int_{\bbR^n}|f(x+y)-f(x)|^p\om(x)dx=0,\ \text{uniformly for all}\ f\in E.
  \ee
  \en
\end{lemma}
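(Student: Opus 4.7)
The plan is to prove this weighted Fr\'echet-Kolmogorov criterion through a mollification argument, where the $\apn$ hypothesis is used twice: for local control of convolutions, and for the weighted Hardy-Littlewood maximal inequality. Given a sequence $\{f_k\}\subset E$, the goal is to produce an $L^p_\omega(\rn)$-Cauchy subsequence.

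Fix $\varepsilon>0$. Using (b), I would select $N$ so that $\int_{|x|>N}|f|^p\omega\,dx<\varepsilon^p$ for every $f\in E$. Let $\phi\in C_c^\infty(B(0,1))$ be a standard nonnegative mollifier with $\int\phi=1$ and set $\phi_\rho(x):=\rho^{-n}\phi(x/\rho)$. Writing $f*\phi_\rho(x)-f(x)=\int\phi_\rho(y)\bigl(f(x-y)-f(x)\bigr)\,dy$ and applying Minkowski's integral inequality in $L^p_\omega(\rn)$ together with (c), I would fix some $\rho\in(0,N)$ such that $\sup_{f\in E}\|f*\phi_\rho-f\|_{L^p_\omega(\rn)}<\varepsilon$; note that $L^p_\omega\subset L^1_{loc}$ under $\omega\in\apn$, so these convolutions are well defined.

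Next I would show the family $\{g_k:=f_k*\phi_\rho\}$ is precompact in $C(\overline{B(0,2N)})$. Since $\omega\in\apn$, the dual weight $\omega^{1-p'}$ is locally integrable; applying H\"older's inequality to $g_k(x)=\int f_k(y)\phi_\rho(x-y)\,dy$ with respect to the pair $(\omega,\omega^{1-p'})$ gives a pointwise bound of the form $|g_k(x)|\leq C(\rho,N,\omega)\sup_{f\in E}\|f\|_{L^p_\omega}$ for all $x\in\overline{B(0,2N)}$, and the same argument applied to $\nabla g_k=f_k*\nabla\phi_\rho$ yields a uniform Lipschitz constant. Arzel\`a-Ascoli then produces a subsequence converging uniformly on $\overline{B(0,2N)}$, and since $\omega(B(0,2N))<\infty$ this is also convergence in $L^p_\omega(B(0,2N))$.

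To control the tail on $\{|x|>2N\}$, I would decompose $f_k=f_k\chi_{B(0,N)}+f_k\chi_{B(0,N)^c}$: since $\rho<N$, the first piece convolved with $\phi_\rho$ vanishes on $\{|x|>2N\}$, so there $g_k=(f_k\chi_{B(0,N)^c})*\phi_\rho$. The pointwise domination $|h*\phi_\rho|\lesssim Mh$ combined with Muckenhoupt's weighted boundedness of the Hardy-Littlewood maximal operator on $L^p_\omega(\rn)$ yields $\|g_k\|_{L^p_\omega(\{|x|>2N\})}\lesssim\|f_k\chi_{B(0,N)^c}\|_{L^p_\omega}<\varepsilon$, while (b) likewise controls $\|f_k\chi_{\{|x|>2N\}}\|_{L^p_\omega}$. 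The triangle inequality $\|f_j-f_k\|_{L^p_\omega}\leq\|f_j-g_j\|+\|g_j-g_k\|+\|g_k-f_k\|$ combined with the Cauchy property on $B(0,2N)$ and a diagonal procedure over $\varepsilon=2^{-m}$ then extracts the desired $L^p_\omega$-Cauchy subsequence. The principal technical point is keeping every constant uniform in $f\in E$: this is where the three hypotheses cooperate, with (a) underlying the Arzel\`a-Ascoli step, (b) confining the mass, and (c) quantifying the mollifier smoothing in the weighted norm.
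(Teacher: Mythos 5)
Your argument is correct, but note that the paper does not prove this lemma at all: it simply recalls it as the weighted Fr\'echet--Kolmogorov theorem from \cite{ClopCruz13AASFM}. Your proposal therefore supplies a self-contained proof where the paper offers a citation, and the route you take is essentially the standard one behind the cited result: mollify, $f*\phi_\rho$, with $\rho$ chosen uniformly over $E$ via (c) and Minkowski's integral inequality; use local integrability of $\om^{1-p'}$ (from $\om\in\apn$) together with (a) to get uniform bounds on $g_k=f_k*\phi_\rho$ and $\nabla g_k$ on $\overline{B(0,2N)}$, hence Arzel\`a--Ascoli and, since $\om(B(0,2N))<\infty$, convergence in $L^p_\om(B(0,2N))$; and control the exterior region by the pointwise domination $|h*\phi_\rho|\lesssim Mh$ plus Muckenhoupt's weighted maximal inequality, with (b) confining the mass. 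All the uniformity issues that matter are addressed: the choice of $\rho$ is uniform in $f\in E$ by hypothesis (c), the constant in the maximal bound depends only on $[\om]_{A_p}$, and the support argument needs only $\rho<N$. What your approach buys is transparency about exactly where each hypothesis and each feature of the $A_p$ class enters (the dual-weight local integrability for the Arzel\`a--Ascoli step, the maximal inequality for the tails), at the cost of redoing an argument the paper outsources; the paper's choice to cite keeps Section 2 focused on the genuinely new difficulties coming from the oscillation and variation structure.
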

Then, we collect some basic properties of the Muckenhoupt class $\apn$.
One can see \cite{Grafakos08} for the proofs of (i)-(iii) of Lemma \ref{lemma, property of weight}.
\begin{lemma}\label{lemma, property of weight}
Let $1<p<\infty$.
\begin{itemize}
  \item [{\rm(i)}] $\om\in A_p\Longleftrightarrow \om^{1-p'}=\om^{-p'/p}\in A_{p'}$.
  \item [{\rm (ii)}] If $\om\in A_p$, there exists a small constant $\ep$ depending only on $n$, $p$ and $[\om]_{A_p}$ such that
  $\om\in A_{p-\ep}$.
  \item [{\rm (iii)}]For all $\la>1$, and all cubes $Q$,
\be
\om(\la Q)\leq \la^{np}[\om]_{A_p}\om(Q).
\ee
  \item [{\rm (iv)}] If $\om\in A_p$, we have
  \be
  \lim_{N\rightarrow +\infty}\int_{B(0,N)^c}\frac{\om(x)}{|x|^{np}}dx=0,\ \ \
  \lim_{N\rightarrow +\infty}\int_{B(0,N)^c}\frac{\om^{1-p'}(x)}{|x|^{np'}}dx=0.
  \ee
\end{itemize}
\end{lemma}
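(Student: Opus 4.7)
Parts (i)--(iii) are the classical facts cited from Grafakos, so I will concentrate on (iv). Since, by (i), the dual weight $\om^{1-p'}$ lies in $A_{p'}$, and the exponent $np'$ plays for $\om^{1-p'}$ the same role that $np$ plays for $\om$, the second limit follows from the first under the substitution $(\om, p) \mapsto (\om^{1-p'}, p')$. It therefore suffices to establish the first limit.

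The plan is to cover $B(0,N)^c$ by the dyadic annuli $A_k := \{2^k \leq |x| < 2^{k+1}\}$ for $k \geq k_0$ with $2^{k_0} \sim N$. Since $|x|^{-np} \leq 2^{-knp}$ on $A_k$, the integral is controlled by
\begin{equation*}
\sum_{k \geq k_0} 2^{-knp}\,\om(B(0,2^{k+1})).
\end{equation*}
The remaining task is to show that the tail of this series tends to $0$ as $k_0 \to \infty$.

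The main obstacle is a critical scaling issue: applying (iii) directly at the exponent $p$ (with $Q = B(0,1)$, $\lambda = 2^{k+1}$) yields $\om(B(0,2^{k+1})) \lesssim 2^{knp}\,\om(B(0,1))$, which merely cancels the factor $2^{-knp}$ and produces a divergent series. To create genuine decay I would invoke the self-improvement property (ii) and fix $\ep = \ep(n,p,[\om]_{A_p}) > 0$ such that $\om \in A_{p-\ep}$. Applying (iii) at the exponent $p-\ep$ now gives $\om(B(0,2^{k+1})) \lesssim 2^{kn(p-\ep)}\,\om(B(0,1))$, so the tail becomes a geometric series
\begin{equation*}
\sum_{k \geq k_0} 2^{-kn\ep}\,\om(B(0,1)) \lesssim 2^{-k_0 n\ep}\,\om(B(0,1)),
\end{equation*}
which tends to $0$ as $N \to \infty$. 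Combined with the duality reduction, this yields (iv).
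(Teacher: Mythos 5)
Your proposal is correct and follows essentially the same route as the paper: both proofs reduce the second limit to the first via duality (i), decompose the complement of the ball into dyadic annuli, invoke the self-improvement (ii) to pass from $A_p$ to $A_{p-\ep}$, and then apply the doubling estimate (iii) at the improved exponent $p-\ep$ to produce the geometric decay $2^{-kn\ep}$. The only difference is cosmetic indexing of the annuli.
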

\begin{proof}
We only give the proof of (iv). Since $\om\in A_p$, there exists $\ep>0$ such that $\om \in A_{p-\ep}$.
Write
\be
\begin{split}
\int_{B(0,N)^c}\frac{\om(x)}{|x|^{np}}dx
= &
\sum_{j=0}^{\infty}\int_{2^jN\leq |x|< 2^{j+1}N}\frac{\om(x)}{|x|^{np}}dx
\\
\lesssim &
\sum_{j=0}^{\infty}(2^jN)^{-np}\om(B(0,2^{j+1}N))
\\
\lesssim &
\sum_{j=0}^{\infty}(2^jN)^{-np}(2^jN)^{n(p-\ep)}
=
N^{-n\ep}\sum_{j=0}^{\infty}2^{-jn\ep}\longrightarrow 0
\end{split}
\ee
as $N\rightarrow +\infty$,
where we use property (iii) in the second inequality.
Similarly, by property (i), we get $\om^{1-p'}\in A_{p'}$, then the second equality of (iv) follows.
\end{proof}

We now present the proof of Theorem \ref{theorem, compactness}. We point out that
since there is some essential difference between the arguments for oscillation and variation,
we give the proofs of (1) and (2), respectively.

\begin{proof}[Proof of (1) in Theorem \ref{theorem, compactness}]
Assume that $\calO(\calT_{K})$ is bounded on $L^p_{\om}(\rn)$ and $b\in \cmo$.
  Using Lemma \ref{lemma, bd} (1), we see that $\calO(\calT_{K}^b)$
  is also bounded on $L^p_{\om}(\rn)$.  Moreover, by the definition of $\cmo$,
 it suffices to show  $\calO(\calT_{K}^b)$ is compact on $L^p_{\om}(\rn)$
  for $b\in C_c^{\infty}(\bbR^n)$.
  To this end, we follow the idea in \cite{KrantzLi01JMAAb} and
  consider smooth truncated singular integral operators.
  Take $\va\in C_c^{\infty}(\bbR^n)$ supported on $B(0,1)$ such that $\va=1$ on $B(0,1/2)$, $0\leq \va\leq 1$.
Let $\delta>0$ be a small constant,
$$\va_{\d}(x):=\va(\frac{x}{\d}),\,\, K^{\d}(x,y): =K(x,y)\cdot (1-\va_{\d}(x-y)),$$
and
$$T_{K^{\d}}^b(f)(x):=b(x)\int_{\bbR^n}K^{\d}(x,y)f(y)dy-\int_{\bbR^n}K^{\d}(x,y)b(y)f(y)dy.$$

We first claim that for any $f\in L^p_{\om}(\rn)$,
\begin{equation}\label{pf 1,1}
\|\calO(\calT_{K}^b)(f)-\calO(\calT_{K^{\d}}^b)(f)\|_{L^p_{\om}(\rn)}\lesssim \d\|f\|_{L^p_{\om}(\rn)},
\end{equation}
where the implicit constant is independent of $f$. In fact, A simple calculation yields that the kernel $K^{\d}$ also satisfies
\eqref{conditon of kerenel 1} and \eqref{conditon of kerenel 2} with $C_K$
replaced by certain constant $C$ therein.
Moreover, $K^{\d}$ is a bounded function since for any $x, y$ with $x\not=y$,
\ben\label{pf bd of Kd}
|K^{\d}(x,y)|
\lesssim \frac{1}{|x-y|^n}(1-\va_{\d}(x-y))
\leq \frac{1}{|x-y|^n}\chi_{\{(x,y):|x-y|\geq \d/2\}}\lesssim \frac{1}{\d^n}.
\een
By the sub-linearity of oscillation, we have
\be
|\calO(\calT_{K}^b)(f)-\calO(\calT_{K^\d}^b)(f)|\leq \calO(\calT_{K}^b-\calT_{K^{\d}}^b)(f).
\ee
From this and the fact that for any $x$ and $y$,
$$|b(x)-b(y)|\leq \|\nabla b\|_{L^{\infty}(\rn)}|x-y|,\,\, \textrm{and}\,\, |\va_{\d}(x-y)|\leq \chi_{\{(x,y):|x-y|\leq \d\}}(x,y),$$
we further deduce that for any $x$,
\begin{eqnarray*}
  &&|\calO(\calT_{K}^b)(f)(x)-\calO(\calT_{K^{\d}}^b)(f)(x)|\nonumber
  \\
  &&\quad\leq
  \left(\sum_{i=1}^{\infty}\sup_{t_{i+1}\leq \ep_{i+1}<\ep_i\leq t_i}\left|\int_{\ep_{i+1}<|x-y|\leq \ep_i}(b(x)-b(y))\va_{\d}(x-y)K(x,y)f(y)dy\right|^2\right)^{1/2}\nonumber
  \\
  &&\quad\lesssim
  \left(\sum_{i=1}^{\infty}\sup_{t_{i+1}\leq \ep_{i+1}<\ep_i\leq t_i}\left|\int_{\ep_{i+1}<|x-y|\leq \ep_i}\va_{\d}(x-y)\frac{|f(y)|}{|x-y|^{n-1}}dy\right|^2\right)^{1/2}
  \\
  &&\quad\leq
  \left(\sum_{i=1}^{\infty}\left|\int_{t_{i+1}<|x-y|\leq t_i}\va_{\d}(x-y)\frac{|f(y)|}{|x-y|^{n-1}}dy\right|^2\right)^{1/2}\nonumber
  \\
  &&\quad\leq
  \sum_{i=1}^{\infty}\int_{t_{i+1}<|x-y|\leq t_i}\va_{\d}(x-y)\frac{|f(y)|}{|x-y|^{n-1}}dy
  \leq
  \int_{B(x,\d)}\frac{|f(y)|}{|x-y|^{n-1}}dy\lesssim \d M(f)(x),\nonumber
\end{eqnarray*}
where $M(f)$ is the Hardy-littlewood maximal function of $f$, and the implicit constant is independent of $x,\d$ and $f$.
This via the boundedness of $M(f)$ on $L^p_{\om}(\rn)$ implies that
\begin{equation}\label{e-osci commu donimate HL maxi}
\|\calO(\calT_{K}^b)(f)-\calO(\calT_{K^{\d}}^b)(f)\|_{L^p_{\om}(\rn)}\lesssim \d \|M(f)\|_{L^p_{\om}(\rn)}\lesssim \d\|f\|_{L^p_{\om}(\rn)}
\end{equation}
and shows the claim \eqref{pf 1,1}.

Now observe that to show $\calO(\calT_{K}^b)$ is compact on $L^p_{\om}(\rn)$,
we only need to show that
$$\{\calO(\calT_{K}^b)(f): \|f\|_{L_{\omega}^p(\mathbb{R}^n)}\leq 1\}$$
is precompact. Then by \eqref{pf 1,1},
it suffices to show that for $\d$ small enough,
the set
$$A(\calO(\calT_{K^{\d}}^b)):=\{\calO(\calT_{K^{\d}}^b)(f): \|f\|_{L_{\omega}^p(\mathbb{R}^n)}\leq 1\}$$
 is precompact.

 We now use Lemma \ref{lemma, criterion of precompact} to show that $A(\calO(\calT_{K^{\d}}^b))$ is
 precompact. Firstly, note that \eqref{pf 1,1} also yields the $L^p_{\om}(\rn)$-boundedness of $\calO(\calT_{K^{\d}}^b)$.
 Then we see that $A(\calO(\calT_{K^{\d}}^b))$ is a bounded set in $L^p_{\om}(\rn)$, and
  (a) of Lemma \ref{lemma, criterion of precompact} is true.

To show (b), without loss of generality, we assume that $b$ is supported in a cube $Q$ centered at the origin.
For $x\in (2Q)^c$, by \eqref{conditon of kerenel 1} for $K^{\d}$, the H\"older inequality and $\|f\|_{L^p_{\om}(\rn)}\le1$, we have
\begin{align*}
  |\calO(\calT_{K^{\d}}^b)(f)(x)|
  = &
  \left(\sum_{i=1}^{\infty}\sup_{t_{i+1}\leq \ep_{i+1}<\ep_i\leq t_i}\left|\int_{\ep_{i+1}<|x-y|\leq \ep_i}b(y)K^{\d}(x,y)f(y)dy\right|^2\right)^{1/2}
    \\
    \lesssim &
  \left(\sum_{i=1}^{\infty}\sup_{t_{i+1}\leq \ep_{i+1}<\ep_i\leq t_i}\left|\int_{\ep_{i+1}<|x-y|\leq \ep_i}\frac{|f(y)|\chi_Q(y)}{|x-y|^n}dy\right|^2\right)^{1/2}
  \\
  \leq &
  \int_{Q}\frac{|f(y)|}{|x-y|^n}dy
  \lesssim
  \frac{1}{|x|^{n}}\int_{Q}|f(y)|dy
  \lesssim
  \frac{1}{|x|^{n}}\left(\int_{Q}\om^{-p'/p}(y)dy\right)^{1/p'}.
\end{align*}
Taking $N>2$, we then have
\begin{align*}
  \left(\int_{(2^NQ)^c}|\calO(\calT_{K^{\d}}^b)(f)(x)|^p\om(x)dx\right)^{1/p}
  \lesssim
  \left(\int_{(2^NQ)^c}\frac{\om(x)}{|x|^{pn}}dx\right)^{1/p}\left(\int_{Q}\om^{-p'/p}(y)dy\right)^{1/p'},
\end{align*}
which tends to zero as $N$ tends to infinity, where we use (iv) in Lemma \ref{lemma, property of weight}.
Thus, Lemma \ref{lemma, criterion of precompact} (b) holds.

It remains to prove that $A(\calO(\calT_{K^{\d}}^b))$ satisfies Lemma (c). Taking $z\in \bbR^n$ with $|z|\leq \frac{\d}{8}$, then
\begin{align*}
  &|\calO(\calT_{K^{\d}}^b)(f)(x+z)-\calO(\calT_{K^{\d}}^b)(f)(x)|
  \\
  &\quad\leq
  \Bigg(\sum_{i=1}^{\infty}\sup_{t_{i+1}\leq \ep_{i+1}<\ep_i\leq t_i}
  |T_{K^{\d},\ep_{i+1}}^bf(x+z)-T_{K^{\d},\ep_{i}}^bf(x+z)\\
  &\quad\quad-(T_{K^{\d},\ep_{i+1}}^bf(x)-T_{K^{\d},\ep_{i}}^bf(x))|^2\Bigg)^{1/2}.
\end{align*}
Moreover, for each $i,$ we write
\begin{align*}
&|T_{K^{\d},\ep_{i+1}}^bf(x+z)-T_{K^{\d},\ep_{i}}^bf(x+z)-(T_{K^{\d},\ep_{i+1}}^bf(x)-T_{K^{\d},\ep_{i}}^bf(x))|
\\
&\quad=
  \Big|\int_{\ep_{i+1}<|x+z-y|\leq \ep_i}(b(x+z)-b(y))K^{\d}(x+z,y)f(y)dy
\\
&\qquad-\int_{\ep_{i+1}<|x-y|\leq \ep_i}(b(x)-b(y))K^{\d}(x,y)f(y)dy\Big|
  \\
&\quad\leq
\Big|\int_{\ep_{i+1}<|x+z-y|\leq \ep_i}(b(x+z)-b(y))K^{\d}(x+z,y)f(y)dy
  \\
&\qquad  -\int_{\ep_{i+1}<|x-y|\leq \ep_i}(b(x+z)-b(y))K^{\d}(x+z,y)f(y)dy\Big|
  \\&\qquad  +  \left|\int_{\ep_{i+1}<|x-y|\leq \ep_i}(b(x+z)-b(x))K^{\d}(x,y)f(y)dy\right|
  \\
&\qquad  +
  \left|\int_{\ep_{i+1}<|x-y|\leq \ep_i}(b(x+z)-b(y))(K^{\d}(x+z,y)-K^{\d}(x,y))f(y)dy\right|
  \\
&\quad  = : I_1(i)+I_2(i)+I_3(i).
\end{align*}

We first estimate $I_2(i)$.
Since $b\in C_c^{\infty}(\bbR^n)$, we have
\begin{align*}
I_2(i)\leq
|z|\|\nabla b\|_{L^{\infty}(\rn)}\left|\int_{\ep_{i+1}<|x-y|\leq \ep_i}K^{\d}(x,y)f(y)dy\right|
\end{align*}
which yields that
\be
\begin{split}
  \left(\sum_{i=1}^{\infty}\sup_{t_{i+1}\leq \ep_{i+1}<\ep_i\leq t_i}
  \left|\int_{\ep_{i+1}<|x-y|\leq \ep_i}(b(x+z)-b(x))K^{\d}(x,y)f(y)dy\right|^2\right)^{1/2}
  \lesssim |z|\calO(\calT_{K^{\d}})(f)(x).
\end{split}
\ee
Furthermore, assume that there exists $i_0\in\mathbb N:=\{1,2,\,\cdots\}$ such that
$t_{i_0+1}<\delta\le t_{i_0}$. Then we see that for a. e. $x$,
\begin{align*}
  \calO(\calT_{K^{\d}})(f)(x)
\leq &
  \left(\sum_{i=1}^{i_0-1}\sup_{t_{i+1}\leq \ep_{i+1}<\ep_i\leq t_i}
  \left|\int_{\ep_{i+1}<|x-y|\leq \ep_i}K^{\d}(x,y)f(y)dy\right|^2\right)^{1/2}
  \\
 & +
  \sup_{t_{i_0+1}\leq \ep_{i_0+1}<\ep_{i_0}\leq t_{i_0}}
  \left|\int_{\ep_{i_0+1}<|x-y|\leq \ep_{i_0}}K^{\d}(x,y)f(y)dy\right|\\
&+
  \left(\sum_{i=i_0+1}^\infty\sup_{t_{i+1}\leq \ep_{i+1}<\ep_i\leq t_i}
  \left|\int_{\ep_{i+1}<|x-y|\leq \ep_i}K^{\d}(x,y)f(y)dy\right|^2\right)^{1/2}.
  \end{align*}
  Observe that for a. e. $x$,
\begin{align*}
&\left(\sum_{i=1}^{i_0-1}\sup_{t_{i+1}\leq \ep_{i+1}<\ep_i\leq t_i}
  \left|\int_{\ep_{i+1}<|x-y|\leq \ep_i}K^{\d}(x,y)f(y)dy\right|^2\right)^{1/2}\\
& \quad=
  \left(\sum_{i=1}^{i_0-1}\sup_{t_{i+1}\leq \ep_{i+1}<\ep_i\leq t_i}
   \left|\int_{\ep_{i+1}<|x-y|\leq \ep_i}K(x,y)f(y)dy\right|^2\right)^{1/2}\le \calO(\calT_K)(f)(x),
  \end{align*}
  and
\begin{align*} &
  \left(\sum_{i=i_0+1}^\infty\sup_{t_{i+1}\leq \ep_{i+1}<\ep_i\leq t_i}
  \left|\int_{\ep_{i+1}<|x-y|\leq \ep_i}K^{\d}(x,y)f(y)dy\right|^2\right)^{1/2}
  \\
&\quad \leq\sum_{i=i_0+1}^\infty\int_{t_{i+1}<|x-y|\leq  t_i}|K^{\d}(x,y)||f(y)|dy  \\
&\quad\leq \int_{\d/2\leq |x-y|\leq \d}|K^{\d}(x,y)||f(y)|dy\lesssim M(f)(x).
 \end{align*}
Moreover, take $\tilde \ep_{i_0+1}, \tilde \ep_{i_0}\in [t_{i_0+1}, t_{i_0}]$ such that
   \begin{align*}
   \sup_{t_{i_0+1}\leq \ep_{i_0+1}<\ep_{i_0}\leq t_{i_0}}
  \left|\int_{\ep_{i_0+1}<|x-y|\leq \ep_{i_0}}K^{\d}(x,y)f(y)dy\right|
  \le 2 \left|\int_{\tilde\ep_{i_0+1}<|x-y|\leq \tilde\ep_{i_0}}K^{\d}(x,y)f(y)dy\right|.
   \end{align*}
   We then have
     \begin{align*}
   &\sup_{t_{i_0+1}\leq \ep_{i_0+1}<\ep_{i_0}\leq t_{i_0}}
  \left|\int_{\ep_{i_0+1}<|x-y|\leq \ep_{i_0}}K^{\d}(x,y)f(y)dy\right|\\
  &\quad\le 2 \left|\int_{\tilde\ep_{i_0+1}<|x-y|\leq \tilde\ep_{i_0}}K^{\d}(x,y)f(y)dy\right|\\
  &\quad\lesssim  \left|\int_{\tilde\ep_{i_0+1}<|x-y|\leq \d}K^{\d}(x,y)f(y)dy\right|
  + \left|\int_{\d<|x-y|\leq \tilde\ep_{i_0}}K(x,y)f(y)dy\right|\\
  &\quad \lesssim M(f)(x)+\calO(\calT_K)(f)(x).
   \end{align*}
   Therefore, we conclude that for a. e. $x$,
  \begin{align*}
  \calO(\calT_{K^{\d}})(f)(x)\lesssim& \calO(\calT_K)(f)(x)+M(f)(x),
\end{align*}
where the implicit constant is independent of $i_0$, $\d$, $f$ and $x$.
From the above two estimates, the boundedness of $\calO(\calT_K)(f)$ and $M(f)$ on $L^p_{\om}(\rn)$ and
$\|f\|_{L^p_{\om}(\rn)}\le1$, we get
\begin{eqnarray}\label{pf 1,4}
   &&\left\|\left(\sum_{i=1}^{\infty}\sup_{t_{i+1}\leq \ep_{i+1}<\ep_i\leq t_i}
  I_2(i)^2\right)^{1/2}\right\|_{L^p_{\om}(\rn)}\nonumber\\
  &&\quad  \lesssim
  |z|\left(\|\calO(\calT_K)(f)\|_{L^p_{\om}(\rn)}+\|M(f)\|_{L^p_{\om}(\rn)}\right)\lesssim |z|.
\end{eqnarray}

Next, we turn to the estimate of $I_3(i)$.
Observe that $K^{\d}(x+z,y)$ and $K^{\d}(x,y)$ vanish when $|x-y|\leq \frac{\d}{4}$.
Then by \eqref{conditon of kerenel 2} for $K^{\d}$
\be
\begin{split}
  I_3(i)
  = &
  \left|\int_{\ep_{i+1}<|x-y|\leq \ep_i}(b(x+z)-b(y))(K^{\d}(x+z,y)-K^{\d}(x,y))f(y)dy\right|\\
  \lesssim &
  \int_{\ep_{i+1}<|x-y|\leq \ep_i}\frac{|z|^{\g}}{|x-y|^{n+\g}}\chi_{\{{|x-y|>\d/4}\}}(y)|f(y)|dy,
\end{split}
\ee
where $\g$ is as in \eqref{conditon of kerenel 2}.

From this, we further have
\begin{align*}
  &\left(\sum_{i=1}^{\infty}\sup_{t_{i+1}\leq \ep_{i+1}<\ep_i\leq t_i}
  \left|\int_{\ep_{i+1}<|x-y|\leq \ep_i}(b(x+z)-b(y))(K^{\d}(x+z,y)-K^{\d}(x,y))f(y)dy\right|^2\right)^{1/2}
  \\
  &\quad\lesssim
  \left(\sum_{i=1}^{\infty}\sup_{t_{i+1}\leq \ep_{i+1}<\ep_i\leq t_i}
  \left|\int_{\ep_{i+1}<|x-y|\leq \ep_i}\frac{|z|^{\g}}{|x-y|^{n+\g}}\chi_{\{{|x-y|>\d/4}\}}(y)|f(y)|dy\right|^2\right)^{1/2}
  \\
  &\quad\leq
  \sum_{i=1}^{\infty}
  \int_{t_{i+1}<|x-y|\leq t_i}\frac{|z|^{\g}}{|x-y|^{n+\g}}\chi_{\{{|x-y|>\d/4}\}}(y)|f(y)|dy
  \\
  &\quad\leq
  \int_{|x-y|>\d/4}\frac{|z|^{\g}}{|x-y|^{n+\g}}|f(y)|dy\lesssim \frac{|z|^{\g}}{\d^\g}M(f)(x),
\end{align*}
where the implicit constant is independent of $f$, $x$, $\d$ and $z$.
Thus,
\ben\label{pf 1,5}
\begin{split}
  \left\|\left(\sum_{i=1}^{\infty}\sup_{t_{i+1}\leq \ep_{i+1}<\ep_i\leq t_i}
  I_2(i)^2\right)^{1/2}\right\|_{L^p_{\om}(\rn)}
  \lesssim
  \frac{|z|^{\g}}{\d^\g}\|M(f)\|_{L^p_{\om}(\rn)}\lesssim \frac{|z|^{\g}}{\d^\g}.
\end{split}
\een

Finally, we proceed to the estimate of $I_1(i)$.
Note that
$$\chi_{\{\epsilon_{i+1}<|x+z-y|\le \epsilon_i\}}(y)-\chi_{\{\epsilon_{i+1}<|x-y|\le \epsilon_i\}}(y)\not=0$$
if and only if at least one of the following four statements holds:
\begin{itemize}
 \item [{\rm  (i)}]$\epsilon_{i+1}<|x+z-y|\le\epsilon_i$ and $|x-y|\le \epsilon_{i+1}$;
  \item [{\rm (ii)}]$\epsilon_{i+1}<|x+z-y|\le\epsilon_i$  and $|x-y|> \epsilon_i$;
  \item[{\rm (iii)}] $\epsilon_{i+1}<|x-y|\le\epsilon_i$ and $|x+z-y|\le \epsilon_{i+1}$;
  \item [{\rm (iv)}]$\epsilon_{i+1}<|x-y|\le\epsilon_i$ and $|x+z-y|> \epsilon_i$.
\end{itemize}
This further implies the following four cases:
\begin{itemize}
 \item [{\rm  (i')}]$\epsilon_{i+1}<|x+z-y|\le \epsilon_{i+1}+|z|$;
  \item [{\rm (ii')}]$ \epsilon_i<|x-y|\le \epsilon_i+|z|$;
  \item[{\rm (iii')}] $\epsilon_{i+1}<|x-y|\le \epsilon_{i+1}+|z|$;
  \item [{\rm (iv')}]$\epsilon_i<|x+z-y|\le \epsilon_i+|z|$.
\end{itemize}
We then have that
\begin{align*}
I_1(i)&\ls \int_{\mathbb R^n}|K^\d(x+z, y)|\chi_{\{\epsilon_{i+1}<|x+z-y|\le\epsilon_i\}}(y)\chi_{\{|x-y|\le \epsilon_{i+1}\}}(y)\left|f(y)\right|\,dy\\
&\quad+\int_{\mathbb R^n}|K^\d(x+z, y)|\chi_{\{\epsilon_{i+1}<|x+z-y|\le\epsilon_i\}}(y)\chi_{\{|x-y|> \epsilon_i\}}(y)\left|f(y)\right|\,dy\\
&\quad+\int_{\mathbb R^n}|K^\d(x+z, y)|\chi_{\{\epsilon_{i+1}<|x-y|\le\epsilon_i\}}(y)\chi_{\{|x+z-y|\le \epsilon_{i+1}\}}(y)\left|f(y)\right|\,dy\\
&\quad+\int_{\mathbb R^n}|K^\d(x+z, y)|\chi_{\{\epsilon_{i+1}<|x-y|\le\epsilon_i\}}(y)\chi_{\{|x+z-y|> \epsilon_i\}}(y)\left|f(y)\right|\,dy\\
&=:\sum_{k=1}^4{\rm I}_{1,\,k,\,i}.
\end{align*}

By similarity, we only estimate
$$\left(\sum_{i=1}^{\infty}\sup_{t_{i+1}\leq \ep_{i+1}<\ep_i\leq t_i}
  {\rm I}_{1,\,1,\,i}^2\right)^{1/2}.$$

Observe that if $\epsilon_{i+1}< 2|z|$, then by the fact that $|z|\le|x-y|/2$,
 we have that ${\rm I}_{1,\,1,\,i}=0$. Thus, assume that $i_1\in \mathbb N$
 such that $t_{i_1+1}<2|z|\le t_{i_1}$. We see that for any $i\ge i_1+1$,
 ${\rm I}_{1,\,1,\,i}=0.$ Therefore,
 $$\left(\sum_{i=1}^{\infty}\sup_{t_{i+1}\leq \ep_{i+1}<\ep_i\leq t_i}
  {\rm I}_{1,\,1,\,i}^2\right)^{1/2}=\left(\sum_{i=1}^{i_1}\sup_{t_{i+1}\leq \ep_{i+1}<\ep_i\leq t_i}
  {\rm I}_{1,\,1,\,i}^2\right)^{1/2}.$$
  Moreover, for $i=i_1$, we may assume that $\epsilon_{i_1+1} \ge 2|z|$. Then for
  $i\in\{1,2,\cdots, i_1\}$, since
 $\epsilon_{i+1}\ge 2|z|$,  for $r\in(1, 2)$ such that $w\in A_{p/r}$,
 by (i') and the H\"older inequality, we have
\begin{align*}
{\rm I}_{1,\,1,\,i}&\ls
\left[\int_{\mathbb R^n}|K^\d(x+z,y)|^r\left|f(y)\right|^r
\chi_{\{\epsilon_{i+1}<|x+z-y|\le \epsilon_i\}}(y)\,dy\right]^{1/r}
\left[\left(\epsilon_{i+1}+|z|\right)^n-\epsilon_{i+1}^n\right]^{1/{r'}}\\
&\ls\left[\int_{|x+z-y|\ge\d/2 }\frac{|f(y)|^r}{|x+z-y|^{rn}}\chi_{\{\epsilon_{i+1}<|x+z-y|\le \epsilon_i\}}(y)\,dy\right]^{1/r}
\left[\epsilon_{i+1}^{n-1}|z|\right]^{1/{r'}}\\
&\ls \left[\int_{|x+z-y|\ge\d/2}\frac{|f(y)|^r}{|x+z-y|^{r+n-1}}\chi_{\{\epsilon_{i+1}<|x+z-y|\le \epsilon_i\}}(y)\,dy\right]^{1/r}
|z|^{1/{r'}}.
\end{align*}
Since $r<2$, we then conclude that
\begin{align*}
  &\left(\sum_{i=1}^{\infty}\sup_{t_{i+1}\leq \ep_{i+1}<\ep_i\leq t_i}
  I_{1,1,i}^2\right)^{1/2}\\
&\quad\ls\left(\sum_{i=1}^{\infty}\sup_{t_{i+1}\leq \ep_{i+1}<\ep_i\leq t_i}
\left[\int_{|x+z-y|\ge\d/2}\frac{|f(y)|^r}{|x+z-y|^{r+n-1}}\chi_{\{\epsilon_{i+1}<|x+z-y|\le \epsilon_i\}}(y)\,dy\right]^{2/r}\right)^{1/2}|z|^{1/{r'}}\\
&\quad\ls\left(\sum_{i=1}^{\infty}
\left[\int_{|x+z-y|\ge\d/2}\frac{|f(y)|^r}{|x+z-y|^{r+n-1}}\chi_{\{t_{i+1}<|x+z-y|\le t_i\}}(y)\,dy\right]^{2/r}\right)^{1/2}|z|^{1/{r'}}\\
&\quad\ls\left(\sum_{i=1}^{\infty}
\int_{|x+z-y|\ge\d/2}\frac{|f(y)|^r}{|x+z-y|^{r+n-1}}\chi_{\{t_{i+1}<|x+z-y|\le t_i\}}(y)\,dy\right)^{1/r}|z|^{1/{r'}}\\
&\quad\ls\left(\int_{|x-y|\ge3\d/8}\frac{|f(y)|^r}{|x-y|^{r+n-1}}\,dy\right)^{1/r}|z|^{1/{r'}}\\
&\quad\ls \frac{|z|^{1/r'}}{\d^{1/r'}}\left[M\left(|f|^r\right)(x)\right]^{1/r}.
\end{align*}

Now since $w\in A_{p/r}$, by the boundedness of $M$, we see that
\begin{align}\label{pf 1,6}
&\left\|\left(\sum_{i=1}^{\infty}\sup_{t_{i+1}\leq \ep_{i+1}<\ep_i\leq t_i}
  {\rm I}_{1,\,1,\,i}^2\right)^{1/2}\right\|_{L^p_{\om}(\rn)}
  \ls  \frac{|z|^{1/r'}}{\d^{1/r'}}\left\|\left[M\left(|f|^r\right)\right]^{1/r}\right\|_{L^p_{\om}(\rn)}
  \ls \frac{|z|^{1/r'}}{\d^{1/r'}}.
\end{align}

  The equicontinuity of $A(\calO(\calT_{K^{\d}}^b))$ follows from
the combination of \eqref{pf 1,4}-\eqref{pf 1,6}.
We have now completed this proof.
\end{proof}

\begin{proof}[Proof of (2) in Theorem \ref{theorem, compactness}]
Assume that $\calV_{\r}(\calT_{K})$ is bounded on $L^p_{\om}(\rn)$ and $b\in \cmo$.
Take $K^{\d}(x,y)$ with $\d>0$ as in the proof of (1).
Arguing as in \eqref{e-osci commu donimate HL maxi}, we also have that for any $f\in L^p_{\om}(\rn)$,
\begin{equation*}
\|\calV_{\r}(\calT_{K^\d}^b)(f)-\calV_{\r}(\calT_{K}^b)(f)\|_{L^p_{\om}(\rn)}\lesssim \d \|M(f)\|_{L^p_{\om}(\rn)}\lesssim \d\|f\|_{L^p_{\om}(\rn)},
\end{equation*}
and obtain the boundedness of $\calV_{\r}(\calT_{K^{\d}}^b)$ via this inequality and the $L^p_{\om}(\rn)$-boundedness of
 $\calV_{\r}(\calT_{K}^b)$, which follows from Lemma \ref{lemma, bd} and the $L^p_{\om}(\rn)$-boundedness of $\calV_{\r}(\calT_{K})$. Moreover, by a similar argument, to show $\calV_{\r}(\calT_{K}^b)$ is compact,
we only need to verify the set
$$A(\calV_{\r}(\calT_{K^{\d}}^b)):=\{\calV_{\r}(\calT_{K^{\d}}^b)(f): \|f\|_{L^p_{\om}(\rn)}\leq 1\}$$  is precompact,
where  $b\in C_c^{\infty}(\bbR^n)$.

Without loss of generality, we assume that $b$ is supported in a cube $Q$ centered at the origin.
By the boundedness of $\calV_{\r}(\calT_{K^{\d}}^b)$, $A(\calV_{\r}(\calT_{K^{\d}}^b))$ is a bounded set on $L^p_{\om}(\rn)$.
Therefore condition (a)
of Lemma \ref{lemma, criterion of precompact} holds.

Again, by the same argument as in the proof of (1) in Theorem \ref{theorem, compactness}, we have that for any $\|f\|_{L^p_{\om}(\rn)}\leq 1$
and $x\notin 2^NQ$ with $N>2$,
$$\calV_{\r}(\calT_{K^{\d}}^b)(f)(x) \lesssim
  \frac{1}{|x|^{n}}\left(\int_{Q}\om^{1-p'}(y)dy\right)^{1/p'}.$$
And hence,
\be
\begin{split}
  \left(\int_{(2^NQ)^c}|\calV_{\r}(\calT_{K^{\d}}^b)(f)(x)|^p\om(x)dx\right)^{1/p}
  \lesssim
  \left(\int_{(2^NQ)^c}\frac{\om(x)}{|x|^{pn}}dx\right)^{1/p}\left(\int_{Q}\om^{1-p'}(x)dx\right)^{1/p'}
\end{split}
\ee
tends to zero as $N$ tends to infinity.
This proves condition (b) of Lemma \ref{lemma, criterion of precompact}.

It remains to prove that $A(\calV_{\r}(\calT_{K^{\d}}^b))$ is uniformly equicontinuous in $L^p_{\om}(\rn)$. Take $z\in \bbR^n$ with $|z|\leq \frac{\d}{8}\leq l_Q/2$.
By the sub-linearity of $\calV_{\r}$, we have
\be
\begin{split}
  &|\calV_{\r}(\calT_{K^{\d}}^b)(f)(x+z)-\calV_{\r}(\calT_{K^{\d}}^b)(f)(x)|
  \\
  &\quad\leq
  \sup_{\ep_{i}\downarrow 0}\left(\sum_{i=1}^{\infty}
  |T_{K^{\d},\ep_{i+1}}^bf(x+z)-T_{K^{\d},\ep_{i}}^bf(x+z)-(T_{K^{\d},\ep_{i+1}}^bf(x)-T_{K^{\d},\ep_{i}}^bf(x))|^{\r}\right)^{1/\r}.
\end{split}
\ee
Write
\begin{align*}
&|T_{K^{\d},\ep_{i+1}}^bf(x+z)-T_{K^{\d},\ep_{i}}^bf(x+z)-(T_{K^{\d},\ep_{i+1}}^bf(x)-T_{K^{\d},\ep_{i}}^bf(x))|
\\
&\quad=
  \Big|\int_{\ep_{i+1}<|x+z-y|\leq \ep_i}(b(x+z)-b(y))K^{\d}(x+z,y)f(y)dy
  \\
  &\qquad-\int_{\ep_{i+1}<|x-y|\leq \ep_i}(b(x)-b(y))K^{\d}(x,y)f(y)dy\Big|
  \\
  &\quad\leq
  \Big|\int_{\ep_{i+1}<|x+z-y|\leq \ep_i}(b(x+z)-b(y))K^{\d}(x+z,y)f(y)dy
  \\
  &\quad\quad-\int_{\ep_{i+1}<|x-y|\leq \ep_i}(b(x+z)-b(y))K^{\d}(x+z,y)f(y)dy\Big| \\
  &\qquad+\left|\int_{\ep_{i+1}<|x-y|\leq \ep_i}(b(x+z)-b(x))K^{\d}(x,y)f(y)dy\right|  \\
  &\qquad+
  \left|\int_{\ep_{i+1}<|x-y|\leq \ep_i}(b(x+z)-b(y))(K^{\d}(x+z,y)-K^{\d}(x,y))f(y)dy\right|\\
  &\quad=:  J_1(i)+J_2(i)+J_3(i).
\end{align*}
Observe that $J_2(i)$ is dominated by
\be
|z|\left|\int_{\ep_{i+1}<|x-y|\leq \ep_i}K^{\d}(x,y)f(y)dy\right|,
\ee
which yields that
\be
\begin{split}
  \sup_{\ep_i\downarrow 0}\left(\sum_{i=1}^{\infty}
  J_2(i)^{\r}\right)^{1/\r}
  \lesssim |z|\calV_{\r}(\calT_{K^\d})(f)(x).
\end{split}
\ee
Furthermore,
\begin{align*}
  \calV_{\r}(\calT_{K^\d})(f)(x)
  &\lesssim
\sup_{\ep_i\downarrow 0}\left(\sum_{\epsilon_{i+1}\geq \d}\left|\int_{\ep_{i+1}<|x-y|\leq \ep_i}K^{\d}(x,y)f(y)dy\right|^{\r}\right)^{1/\r}
  \\
  &\quad +
  \sup_{\ep_i\downarrow 0}\left(\sum_{\epsilon_{i}\leq \d}\left|\int_{\ep_{i+1}<|x-y|\leq \ep_i}K^{\d}(x,y)f(y)dy\right|^{\r}\right)^{1/\r}\\
  &=
  \sup_{\ep_i\downarrow 0}\left(\sum_{\epsilon_{i+1}\geq \d}
  \left|\int_{\ep_{i+1}<|x-y|\leq \ep_i}K(x,y)f(y)dy\right|^{\r}\right)^{1/\r}
  \\
  & \quad+
  \sup_{\ep_i\downarrow 0}\left(\sum_{\epsilon_{i}\leq \d}
  \left|\int_{\ep_{i+1}<|x-y|\leq \ep_i}K^{\d}(x,y)f(y)dy\right|^{\r}\right)^{1/\r}
  \\
  &\le
  \calV_{\r}(\calT_{K})(f)(x)+\int_{\d/2\leq |x-y|\leq\d}|K^{\d}(x,y)|\cdot |f(y)|dy\\
  &\lesssim \calV_{\r}(\calT_{K})(f)(x)+M(f)(x).
\end{align*}
Here the implicit constant is independent of the choice of $\d$, $f$ and $x$.
Thus, by the boundedness of $\calV_{\r}(\calT_K)(f)$ and $M(f)$, for any $f$ with $\|f\|_{L^p_{\om}(\rn)}\le 1$,
\begin{align*}
  \left\|\sup_{\ep_i\downarrow 0}\left(\sum_{i=1}^{\infty}
  J_2(i)^{\r}\right)^{1/\r}\right\|_{L^p_{\om}(\rn)}
  \lesssim
  |z|\|\calV_{\r}(\calT_K)(f)+M(f)\|_{L^p_{\om}(\rn)}\lesssim |z|.
\end{align*}

Observe that $K^{\d}(x+z,y)$ and $K^{\d}(x,y)$ vanish when $|x-y|\leq \frac{\d}{4}$. Then by \eqref{conditon of kerenel 2},
$J_3(i)$ is dominated by
\begin{align*}
  \int_{\ep_{i+1}<|x-y|\leq \ep_i}\frac{|z|^{\g}}{|x-y|^{n+\g}}\chi_{\{|x-y|>\d/4\}}(y)|f(y)|dy\ls \left(\frac{|z|}{\d}\right)^\gamma Mf(x).
\end{align*}
By the same argument as the estimate of $I_3$ in the proof of (1) of Theorem \ref{theorem, compactness}, we get
that for any $f$ such that $\|f\|_{L^p_{\om}(\rn)}\le 1$,
\begin{align*}
  \left\|\sup_{\ep_i\downarrow 0}\left(\sum_{i=1}^{\infty}
J_3(i)^{\r}\right)^{1/\r}\right\|_{L^p_{\om}(\rn)}
  \lesssim
 \left(\frac{|z|}{\d}\right)^\gamma\|M(f)\|_{L^p_{\om}(\rn)}\lesssim \left(\frac{|z|}{\d}\right)^\gamma.
\end{align*}

Finally, we turn to the estimate of $J_1(i)$. Denote
$$E_i(x,z): =\{y\in \bbR^n: \ep_{i+1}<|x+z-y|\leq \ep_i\}.$$
Recalling that $b$ is supported in $Q$ and $|z|\leq \d/8\leq l_Q/2$, we have that for any $x\in \mathbb R^n$,
\be
b(x+z)=b(x+z)\chi_{2Q}(x).
\ee
This and the sub-linearity of $\calV_{\r}$ imply that
\begin{align*}
  & \left\|\sup_{\ep_i\downarrow 0}\left(\sum_{i=1}^{\infty}J_1(i)^{\r}\right)^{1/\r}\right\|_{L^p_{\om}(\rn)}
  \\
  &\quad=
  \left\|\sup_{\ep_i\downarrow 0}\left(\sum_{i=1}^{\infty}\left|\int_{E_i(x,z)\triangle E_i(x,0)}(b(x+z)-b(y))K^{\d}(x+z,y)f(y)dy\right|^{\r}\right)^{1/\r}\right\|_{L^p_{\om}(\rn)}
  \\
  &\quad\le
  \left\|\sup_{\ep_i\downarrow 0}\left(\sum_{i=1}^{\infty}\left|\int_{E_i(x,z)\triangle E_i(x,0)}b(x+z)K^{\d}(x+z,y)f(y)dy\right|^{\r}\right)^{1/\r}\right\|_{L^p_{\om}(2Q)}
  \\
  &\qquad +
  \left\|\sup_{\ep_i\downarrow 0}\left(\sum_{i=1}^{\infty}\left|\int_{E_i(x,z)\triangle E_i(x,0)}b(y)K^{\d}(x+z,y)f(y)dy\right|^{\r}\right)^{1/\r}\right\|_{L^p_{\om}(\rn)}
=:L_1+L_2.
\end{align*}
We start with the estimate of $L_1$.
For some large positive constant $N$, denote $f_1:=f\chi_{(2^NQ)^c}$, $f_2:=f\chi_{2^NQ}$,
$$ L_1^1(x):=\sup_{\ep_i\downarrow 0}\left(\sum_{i=1}^{\infty}\left|\int_{E_i(x,z)\triangle E_i(x,0)}|K^{\d}(x+z,y)f_1(y)|dy\right|^{\r}\right)^{1/\r}$$
and
$$L_1^2(x):=\sup_{\ep_i\downarrow 0}\left(\sum_{i=1}^{\infty}\left|\int_{E_i(x,z)\triangle E_i(x,0)}|K^{\d}(x+z,y)f_2(y)|dy\right|^{\r}\right)^{1/\r}.$$
Then we write
\begin{align*}
L_1\leq &
\|b\|_{L^{\infty}(\mathbb R^n)}\left\|\sup_{\ep_i\downarrow 0}\left(\sum_{i=1}^{\infty}\left|\int_{E_i(x,z)\triangle E_i(x,0)}K^{\d}(x+z,y)f(y)dy\right|^{\r}\right)^{1/\r}\right\|_{L^p_{\om}(2Q)}
\\
\ls& \|L_1^1\|_{L^p_{\om}(2Q)}+\|L_1^2\|_{L^p_{\om}(2Q)}.
\end{align*}

Recall $|K^{\d}(x+z,y)|\lesssim \frac{1}{|x+z-y|^n}\sim \frac{1}{|y|^n}$ for $x\in 2Q$ and $y\in (2^NQ)^c$.
For $x\in 2Q$, we have
\begin{align*}
 L_1^2(x) \lesssim &
  \sup_{\ep_i\downarrow 0}\left(\sum_{i=1}^{\infty}\left|\int_{E_i(x,z)\triangle E_i(x,0)}\frac{|f_2(y)|}{|y|^n}dy\right|^{\r}\right)^{1/\r}
  \\
  \leq &
  \sup_{\ep_i\downarrow 0}\sum_{i=1}^{\infty}\int_{E_i(x,z)\triangle E_i(x,0)}\frac{|f_2(y)|}{|y|^n}dy
  \lesssim
  \int_{\bbR^n}\frac{|f_2(y)|}{|y|^n}dy=\int_{(2^NQ)^c}\frac{|f(y)|}{|y|^n}dy
  \\
  \leq &
  \left(\int_{\bbR^n}|f(y)|^p\om(y)dy\right)^{1/p}\left(\int_{(2^NQ)^c}\frac{\om^{1-p'}(y)}{|y|^{np'}}dy\right)^{1/p'}\leq \b^{(1)}_N,
\end{align*}
where $\b^{(1)}_N\rightarrow 0$ as $N\rightarrow \infty$ by (iv) in Lemma \ref{lemma, property of weight}.

Next, recall $|K_{\d}(x+z,y)|\lesssim \frac1{\d^n}$.
For fixed $N>0$, $x\in 2Q$, we have
\begin{align*}
  L_1^1(x)&\lesssim\frac1{\d^n}
  \sup_{\ep_i\downarrow 0}\left|\int_{E_i(x,z)\triangle E_i(x,0)}|f_1(y)|dy\right|^{1-1/\r}\left(\sum_{i=1}^{\infty}\left|\int_{E_i(x,z)\triangle E_i(x,0)}|f_1(y)|dy\right|\right)^{1/\r}
  \\
  &\leq\frac1{\d^n}
  \sup_{\ep_i\downarrow 0} \sup_{i}\left|\int_{E_i(x,z)\triangle E_i(x,0)}|f_1(y)|dy\right|^{1-1/\r}
  \cdot \left(\int_{\bbR^n}|f_1(y)|dy\right)^{1/\r}
  \\
  &\lesssim\frac1{\d^n}
  \sup_{\ep_i\downarrow 0}\sup_{i}\left|\int_{E_i(x,z)\triangle E_i(x,0)}\om(y)^{1-p'}\chi_{2^NQ}(y)dy\right|^{\frac{1}{p'}-\frac{1}{\r p'}}\\
  &\quad\times  \left|\int_{E_i(x,z)\triangle E_i(x,0)}|f_1(y)|^p\om(y)dy\right|^{\frac{1}{p}-\frac{1}{\r p}}
  \\
  &\lesssim\frac1{\d^n}
  \sup_{\ep_i\downarrow 0}\sup_{i}\left|\int_{E_i(x,z)\triangle E_i(x,0)}\om(y)^{1-p'}\chi_{2^NQ}(y)dy\right|^{\frac{1}{p'}-\frac{1}{\r p'}},
\end{align*}
where in the last-to-second inequality, we use the fact that for any $f$ with $\|f\|_{L^p_{\om}(\rn)}\le1$,
$$\int_{\bbR^n}|f_1(y)|dy\le \left[\int_{2Q}|f(y)|^p \omega(y) dy\right]^\frac1p\left[\int_{2Q}\omega(y)^{-\frac{p'}p} dy\right]^\frac1{p'}\ls1
$$
and the constant depends on $p$, $Q$ and $\omega$.

For the last term, we claim that,
\be
|2^NQ \cap (E_i(x,z)\triangle E_i(x,0))|\ls\b^{(2)}_{N,|z|}
\ee
uniformly for all $x\in 2Q$, $\{\epsilon_i\}$ and $i\in \mathbb N$, where $\b^{(2)}_{N,|z|}\rightarrow 0$ as $|z|\rightarrow 0$ for any fixed $N$.

In fact, for any $\{\epsilon_i\}$, assume $i_0\in\mathbb N$ be such that
$$\ep_{i_0+1}< \sqrt{n}(2+2^N)l_Q+\d\le\ep_{i_0}.$$
Then we see that for any $i$ such that $i\le i_0-1$,
$$E_i(x,z)\cap 2^NQ=\emptyset, \quad E_i(x,0)\cap 2^NQ=\emptyset.$$
Moreover, we may further assume that
$\ep_{i_0}\leq \sqrt{n}(2+2^N)l_Q+\d$.
Then for all $i\ge i_0$, by the fact that
$$(E_i(x,z)\triangle E_i(x,0))\subset (B_{x,\ep_i}\triangle B_{x+z,\ep_i})\cup (B_{x,\ep_{i+1}}\triangle B_{x+z,\ep_{i+1}}),$$
we get
\be
\begin{split}
|E_i(x,z)\triangle E_i(x,0)|
\leq &
|B_{x,\ep_i}\triangle B_{x+z,\ep_i}|+|B_{x,\ep_{i+1}}\triangle B_{x+z,\ep_{i+1}}|
\\
\lesssim &
(\ep_i+|z|)^{n-1}|z|+(\ep_{i+1}+|z|)^{n-1}|z|\\
\lesssim& (\sqrt{n}(2+2^N)l_Q+2\d)|z|\leq C_N|z|.
\end{split}
\ee
Therefore the claim follows.

This claim and the fact $\om^{1-p'}\chi_{2^NQ}\in L^1(\rn)$ yield that
\be
  \sup_{\ep_i\downarrow 0}\sup_{i}\left|\int_{E_i(x,z)\triangle E_i(x,0)}\om(y)^{1-p'}\chi_{2^NQ}(y)dy\right|^{\frac{1}{p'}-\frac{1}{\r p'}}\rightarrow 0, \ \ \text{as}\ |z|\rightarrow 0.
\ee
Hence,
\ben\label{pf, 1}
\begin{split}
L_1^1(x)
  \lesssim \frac1{\d^n}\b^{(2)}_{N,|z|}.
\end{split}
\een

Combination of the above estimates for $L_1^1$ and $L_1^2$ yields that
\be
\begin{split}
L_1\lesssim \frac1{\d^n}\|L_1^1\|_{L^p_{\om}(2Q)}+\|L_1^2\|_{L^p_{\om}(2Q)}
   \lesssim  \b^{(1)}_{N}+\frac1{\d^n}\b^{(2)}_{N,|z|}.
\end{split}
\ee
Taking sufficient large $N$ and sufficient small $|z|$, we can make $L_1$ arbitrary small.

Now, we turn to the estimate of $L_2$.
\be
\begin{split}
L_2= &\left\|\sup_{\ep_i\downarrow 0}\left(\sum_{i=1}^{\infty}\left|\int_{E_i(x,z)\triangle E_i(x,0)}b(y)K^{\d}(x+z,y)f(y)dy\right|^{\r}\right)^{1/\r}\right\|_{L^p_{\om}(\rn)}
\\
\lesssim &
\left\|\sup_{\ep_i\downarrow 0}\left(\sum_{i=1}^{\infty}\left|\int_{E_i(x,z)\triangle E_i(x,0)}b(y)K^{\d}(x+z,y)f(y)dy\right|^{\r}\right)^{1/\r}\right\|_{L^p_{\om}((2^{\widetilde N}Q)^c)}
\\
& +\left\|\sup_{\ep_i\downarrow 0}\left(\sum_{i=1}^{\infty}\left|\int_{E_i(x,z)\triangle E_i(x,0)}b(y)K^{\d}(x+z,y)f(y)dy\right|^{\r}\right)^{1/\r}\right\|_{L^p_{\om}(2^{\widetilde N}Q)}:= L_2^1+L_2^2.
\end{split}
\ee
First, we deal with $L_2^1$.
Recall $|K^{\d}(x+z,y)|\lesssim \frac{1}{|x+z-y|^n}\sim \frac{1}{|x^n}$ for $y\in Q$, $x\in (2^{\widetilde{N}}Q)^c$.
We have
\be
\begin{split}
  L_2^1\lesssim &\left\|\sup_{\ep_i\downarrow 0}\left(\sum_{i=1}^{\infty}\left|\int_{E_i(x,z)\triangle E_i(x,0)}\chi_{Q}(y)|f(y)|dy\right|^{\r}\right)^{1/\r}\cdot \frac{1}{|x|^n}\right\|_{L^p_{\om}((2^{\widetilde N}Q)^c)}
  \\
  \lesssim &
  \left\|\sup_{\ep_i\downarrow 0}\sum_{i=1}^{\infty}\int_{E_i(x,z)\triangle E_i(x,0)}\chi_{Q}(y)|f(y)|dy\cdot \frac{1}{|x|^n}\right\|_{L^p_{\om}((2^{\widetilde N}Q)^c)}
  \\
  \lesssim &
  \int_{Q}|f(y)|dy\cdot \left\|\frac{1}{|x|^n}\right\|_{L^p_{\om}((2^{\widetilde N}Q)^c)}\lesssim \b^{(3)}_{\widetilde N},
\end{split}
\ee
where $\b^{(3)}_{\widetilde N}\rightarrow 0$ as $\widetilde N\rightarrow \infty$ by (iv) in Lemma \ref{lemma, property of weight}.
Then, for fixed $\widetilde N$, by the same technique as in the estimate of $L_1^1$,
\be
\begin{split}
  L_2^2\lesssim &\frac1{\d^n}\left\|\sup_{\ep_i\downarrow 0}\left(\sum_{i=1}^{\infty}\left|\int_{E_i(x,z)\triangle E_i(x,0)}|f(y)|\chi_Q(y)dy\right|^{\r}\right)^{1/\r}\right\|_{L^p_{\om}(2^{\widetilde N}Q)}
  \lesssim\frac1{\d^n}
  \b^{(4)}_{\widetilde N,|z|},
\end{split}
\ee
where $\b^{(4)}_{\widetilde N,|z|}\rightarrow 0$ as $|z|\rightarrow 0$ for any fixed $\widetilde N$.
Hence,
\be
\begin{split}
  & \left\|\sup_{\ep_i\downarrow 0}\left(\sum_{i=1}^{\infty}\left|\int_{E_i(x,z)\triangle E_i(x,0)}(b(x+z)-b(y))K^{\d}(x+z,y)f(y)dy\right|^{\r}\right)^{1/\r}\right\|_{L^p_{\om}(\rn)}
  \\
  &\quad\lesssim
  L_1+L_2
  \lesssim
  \b^{(1)}_N+\frac1{\d^n}\b^{(2)}_{N,|z|}+\b^{(3)}_{\widetilde N}+\frac1{\d^n}\b^{(4)}_{\widetilde N,|z|}.
\end{split}
\ee
Therefore, we conclude that the set $A(\calV_{\r}(\calT_{K^{\d}}^b))$ is uniformly equicontinuous in $L^p_{\om}(\rn)$
and hence Lemma \ref{lemma, criterion of precompact} (c) holds, which finishes the proof of Theorem \ref{theorem, compactness} (2).
\end{proof}

\section{Necessity of compact oscillation and variation of commutators}
This section is devoted to the proof of Theorem \ref{theorem, necessity of cp}.
We follow the approach of \cite{GuoWuYang17Arxiv}.
For any measurable function $f$,
let $f^*$ be the non-increasing rearrangement of $f$, namely,
for any $t\in(0,\infty)$,
$$f^*(t):=\inf\left\{\alpha\in(0,\infty):\,\,|\{x\in\mathbb R^n:|f(x)|>\alpha\}|<t\right\}.$$
Recall the John-Str\"{o}mberg equivalence (see \cite{John65} and \cite{Stromberg79IUMJ}) of a function $f\in\rm BMO(\mathbb{R}^n)$
\ben\label{e-J-S equiv}
\|f\|_{\bmo}\sim \|f\|_{\bmoz}:=\sup_Qa_{\lambda}(f;Q),
\een
where for $0<\lambda<1$, the local mean oscillation of $f$ over a cube $Q$ is defined by
\be
a_{\lambda}(f;Q):=\inf_{c\in \mathbb{C}}((f-c)\chi_Q)^*(\lambda|Q|).
\ee
In \cite{GuoWuYang17Arxiv}, the following equivalent characterization of $\cmo$ in terms of the local mean oscillation was established.
\begin{lemma}\label{lemma, new cmo}(\cite{GuoWuYang17Arxiv})
  Let $f\in \bmo$. Then $f\in \cmo$ if and only if the following three conditions hold:
  \bn
  \item $\lim_{r\rightarrow 0}\sup\limits_{|Q|=r}a_{\la}(f;Q)=0$,
  \item $\lim_{r\rightarrow \infty}\sup\limits_{|Q|=r}a_{\la}(f;Q)=0$,
  \item $\lim_{d\rightarrow \infty}\sup\limits_{Q\cap [-d,d]^n=\emptyset}a_{\la}(f;Q)=0$.
  \en
\end{lemma}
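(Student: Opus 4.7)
The plan is to establish the two implications of the lemma separately.

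For the ``only if'' direction, I would first record the elementary inequality $a_\lambda(f;Q)\le \lambda^{-1}\mathcal{O}(f;Q)$. This follows by applying Chebyshev to the non-increasing rearrangement of $(f-f_Q)\chi_Q$:
\begin{equation*}
a_\lambda(f;Q)\le ((f-f_Q)\chi_Q)^{\ast}(\lambda|Q|)\le \frac{1}{\lambda|Q|}\int_Q|f(x)-f_Q|\,dx=\frac{1}{\lambda}\mathcal{O}(f;Q).
\end{equation*}
Combined with the classical Uchiyama characterization of $\cmo$ (the statement that $f\in\cmo$ is equivalent to $f\in\bmo$ plus the three analogous limit conditions with $\mathcal{O}(f;Q)$ in place of $a_\lambda(f;Q)$), conditions (1)--(3) follow at once.

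For the ``if'' direction, the strategy is to construct explicit $C_c^\infty$ approximations to $f$ in the $\bmo$ topology, using (1)--(3) to choose scales. Given $\varepsilon>0$, use (1), (2), (3) to pick $0<r_1<r_2<\infty$ and $d>0$ such that $a_\lambda(f;Q)<\varepsilon$ whenever $|Q|\le r_1$, $|Q|\ge r_2$, or $Q\cap[-d,d]^n=\emptyset$. For $R\gg d+r_2^{1/n}$ and $0<\delta\ll r_1^{1/n}$, define
\begin{equation*}
\varphi_{R,\delta}(x):=\psi(x/R)\cdot (f\ast \rho_\delta)(x)\in C_c^\infty(\bbR^n),
\end{equation*}
where $\psi\in C_c^\infty$ equals $1$ on $B(0,1)$ with support in $B(0,2)$, and $\rho_\delta$ is a standard non-negative radial mollifier. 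Using the John-Str\"omberg equivalence \eqref{e-J-S equiv}, it suffices to bound $\sup_Q a_\lambda(f-\varphi_{R,\delta};Q)$ uniformly. Split cubes $Q$ into regimes: (i) $Q$ very far from the origin, where $\varphi_{R,\delta}\equiv 0$ on $Q$ and (3) gives $a_\lambda(f;Q)<\varepsilon$; (ii) $Q\subset B(0,R)$ of size much bigger than $\delta$, where $\varphi_{R,\delta}|_Q=(f\ast\rho_\delta)|_Q$ and the oscillation of $f-f\ast\rho_\delta$ at scale $|Q|^{1/n}$ is controlled by oscillations of $f$ at scale $\delta$, which are small by (1); (iii) $Q$ of very large size where both (2) and the compactly supported nature of $\varphi_{R,\delta}$ kick in; and (iv) cubes comparable to $\delta$, where one directly uses that $\mathcal{O}(f\ast\rho_\delta;Q)\le \mathcal{O}(f;cQ)$ for a slightly larger cube and invokes (1) again.

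The main obstacle is the careful treatment of cubes at intermediate scales, particularly those that straddle $\operatorname{supp}\psi(\cdot/R)$ or cubes of size between $r_1$ and $r_2$ near the origin. Straddling cubes require extracting decay from $(1-\psi(x/R))(f\ast\rho_\delta)(x)$ and using that for such $Q$ one can find an enclosing cube $\tilde Q$ with size either $\ge r_2$ or $\cap[-d,d]^n=\emptyset$, so that (2) or (3) applies after a monotonicity comparison. The intermediate-scale cubes near the origin need a separate argument: since they lie in a fixed compact region of fixed range of sizes, and since $\varphi_{R,\delta}$ is smooth with derivatives controlled by $\delta^{-1}$ times local $L^1$ norms of $f$, one shows that for small enough $\delta$ the oscillation of $\varphi_{R,\delta}$ on such cubes approximates that of $f$ on $\delta$-neighborhoods, which is again small by (1). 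Iterating the Str\"omberg equivalence carefully at each stage delivers the desired $\|f-\varphi_{R,\delta}\|_{\bmo}\to 0$, hence $f\in\cmo$.
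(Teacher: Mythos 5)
Your ``only if'' half is correct and is the standard argument (the Chebyshev bound $a_{\lambda}(f;Q)\le\lambda^{-1}\mathcal{O}(f;Q)$ combined with Uchiyama's characterization of $\cmo$ in terms of mean oscillation); note that the present paper does not prove the lemma at all but quotes it from \cite{GuoWuYang17Arxiv}, so the comparison is with that source's argument, which likewise runs through Uchiyama's conditions but does the hard conversion between the two notions of oscillation explicitly.

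The gap is in the ``if'' direction: at every decisive point you silently upgrade the hypothesis, which concerns the \emph{local mean oscillation} $a_{\lambda}$, to a bound on the $L^1$ mean oscillation $\mathcal{O}$. In regimes (ii) and (iv) you control $f-f\ast\rho_{\delta}$ and $\mathcal{O}(f\ast\rho_{\delta};Q)$ by ``oscillations of $f$ at scale $\delta$, small by (1)'', and in regime (iii) you use (2) as if it bounded averages of $|f-c|$ over large cubes; but (1)--(3) only say that $f$ is within $\varepsilon$ of a constant outside an exceptional subset of measure $\lambda|Q|$, on which $|f-c|$ can be as large as John--Nirenberg permits, and mollification/$L^1$ estimates see exactly that exceptional set. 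Bridging this is the actual content of the lemma. One needs a \emph{localized} John--Str\"omberg/Lerner-type estimate, e.g. $\mathcal{O}(f;Q)\lesssim\sup_{Q'\subseteq Q}a_{\lambda}(f;Q')$ (valid only for $\lambda\le 1/2$, so the admissible range of $\lambda$ matters, which you do not address); this converts (1) and (3) into the corresponding statements for $\mathcal{O}$, but it does \emph{not} convert (2), since a huge cube contains intermediate-size subcubes near the origin on which none of (1)--(3) gives smallness. Those must be handled by a separate accounting (for instance a Calder\'on--Zygmund or sparse decomposition of $Q$, observing that the offending subcubes all lie in a fixed box of volume comparable to $d^n$, so their total contribution to $\mathcal{O}(f;Q)$ is $O\big(\|f\|_{\bmo}\,d^n/|Q|\big)$). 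A further untreated point: the quasi-subadditivity $a_{\lambda}(g+h;Q)\le a_{\lambda/2}(g;Q)+a_{\lambda/2}(h;Q)$ degrades the parameter, so estimating $\sup_Q a_{\lambda}(f-\varphi_{R,\delta};Q)$ by applying (1)--(3) to $f$ requires knowing the conditions are stable under changing $\lambda$, which again is not automatic for the large-cube condition. With the localized John--Str\"omberg estimate and the large-cube accounting supplied, your mollify-and-cutoff scheme (essentially Uchiyama's) would go through; as written, it assumes the hard implication rather than proving it.
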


In order to deal with the necessity conditions for the compact oscillation and variation of commutators,
we recall following two lemmas from \cite{GuoWuYang17Arxiv}.

\begin{lemma}[lower estimates]\label{lemma, l-es}
Let $\om\in \apn$, $\lambda\in(0, 1)$ and $b$ be a real-valued measurable function.
 For a given cube $Q$, there exists a cube $P$ with the same side length of $Q$
 satisfying $|c_Q-c_P|=k_0l_Q\ (k_0>10\sqrt{n})$, and measurable sets $E\subset Q$ with $|E|=\frac{\la}{2}|Q|$,
 and $F\subset P$ with $|F|=\frac{1}{2}|Q|$, such that for $f:=(\int_{F}\om(x)dx)^{-1/p}\chi_F$,
 and any measurable set $B$ with $|B|\leq \frac{\la}{8}|Q|$,
  \begin{equation}\label{e-pointwise equivalence osci}
  \|T_{\Omega}^b(f)\|_{L_{\omega}^p(E\bs B)}\ge C{a}_{\lambda}(b;Q).
  \end{equation}
\end{lemma}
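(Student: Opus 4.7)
The plan is to construct $P$, $E$, $F$ geometrically so that on $E\times F$ both the factor $b(x)-b(y)$ appearing in the commutator and the kernel $\Omega((x-y)/|x-y|)/|x-y|^n$ have a definite sign with size bounded below, and then integrate explicitly against the test function $f=(\int_F\om)^{-1/p}\chi_F$. The lower bound on $\|T_{\Omega}^b f\|_{L^p_\om(E\bs B)}$ will then follow by a direct computation combined with standard $A_p$/$A_\infty$ properties of $\om$.

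I would first exploit the hypothesis on $\Omega$. Since $\Omega$ does not change sign and is not identically zero on some open subset of $\bbS^{n-1}$, we may assume $\Omega\ge c_0>0$ on a spherical cap $U_0\subset\bbS^{n-1}$. Pick a direction $\xi_0$ with $-\xi_0$ interior to $U_0$ and set $c_P=c_Q+k_0 l_Q\xi_0$. Choosing $k_0>10\sqrt{n}$ large enough (depending on the angular radius of $U_0$) guarantees that for every $x\in Q$ and $y\in P$ the unit vector $(x-y)/|x-y|$ lies in $U_0$, so that $\Omega((x-y)/|x-y|)\ge c_0$, while $|x-y|\sim k_0 l_Q$.

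Next, to produce the correct sign of $b(x)-b(y)$, I would take $\alpha$ to be a median of $b$ on $P$, so that both $|\{y\in P:b(y)\le\alpha\}|$ and $|\{y\in P:b(y)\ge\alpha\}|$ are at least $|P|/2=|Q|/2$. Applying the John--Str\"omberg characterization \eqref{e-J-S equiv} with $c=\alpha$ gives
\be
\big|\{x\in Q:|b(x)-\alpha|>a_\lambda(b;Q)/2\}\big|\ge \lambda|Q|.
\ee
Splitting this set into $\{b-\alpha>a_\lambda(b;Q)/2\}$ and $\{\alpha-b>a_\lambda(b;Q)/2\}$ and retaining the larger piece yields $E\subset Q$ with $|E|=\lambda|Q|/2$ on which $b-\alpha$ has a fixed sign of modulus at least $a_\lambda(b;Q)/2$. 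Using the opposite half of $P$ (available by the median property) we pick $F\subset P$ with $|F|=|Q|/2$ on which $\alpha-b(y)$ has the opposite sign. Consequently, on $E\times F$ the quantity $b(x)-b(y)$ has a fixed sign and $|b(x)-b(y)|\ge a_\lambda(b;Q)/2$. This pigeonhole/pairing step is the main conceptual point; the trick of picking the separating level $\alpha$ from $P$ (rather than from $Q$) is what makes the two-sided decomposition of $P$ automatically available.

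Finally, for each $x\in E$ the integrand of
\be
T_\Omega^b f(x)=\Big(\int_F\om\Big)^{-1/p}\int_F (b(x)-b(y))\,\frac{\Omega((x-y)/|x-y|)}{|x-y|^n}\,dy
\ee
has constant sign, with every factor bounded below, so $|T_\Omega^b f(x)|\gtrsim a_\lambda(b;Q)\,|F|/(k_0 l_Q)^n\cdot\om(F)^{-1/p}$. Because $P$ is contained in a fixed dilate of $Q$ depending only on $n$ and $k_0$, Lemma \ref{lemma, property of weight}(iii) gives $\om(F)\le\om(P)\lesssim\om(Q)$; together with $|F|\sim l_Q^n$ this yields $|T_\Omega^b f(x)|\gtrsim a_\lambda(b;Q)\,\om(Q)^{-1/p}$ on $E$. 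Taking the $L^p_\om$-norm over $E\bs B$, whose Lebesgue measure is at least $\lambda|Q|/2-\lambda|Q|/8=3\lambda|Q|/8$, and invoking the $A_\infty$ property of $\om$ (so $\om(E\bs B)/\om(Q)\gtrsim c_\lambda>0$) produces the claimed bound $\|T_\Omega^b f\|_{L^p_\om(E\bs B)}\gtrsim a_\lambda(b;Q)$. The only non-routine ingredient is the sign-pairing argument above; the rest is direct computation using $A_p$ doubling and the $A_\infty$ comparison of Lebesgue and weighted measures.
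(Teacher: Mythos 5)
Your overall strategy coincides with the standard Uchiyama / Lerner--Ombrosi--Rivera-R\'ios argument that the paper relies on (the paper itself does not reprove the lemma; it cites \cite[Proposition 4.2]{GuoWuYang17Arxiv}): choose a translated cube $P$ in a direction aligned with the support of $\Omega$, take $\alpha$ to be a median of $b$ on $P$, use the definition of $a_\lambda(b;Q)$ as an infimum over constants to produce a set $E\subset Q$ of measure $\lambda|Q|/2$ on which $b-\alpha$ has definite sign and modulus $\gtrsim a_\lambda(b;Q)$, pair it with the opposite half $F$ of $P$, and then integrate explicitly, finishing with $A_p$-doubling and the $A_\infty$ comparison of Lebesgue and $\omega$-measures. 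All of that is correct and is essentially the cited proof.

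There is, however, one genuine gap. You write ``we may assume $\Omega\ge c_0>0$ on a spherical cap $U_0$.'' This reduction is not available under the hypotheses of Theorem~\ref{theorem, necessity of cp}, which only requires $\Omega$ to be bounded, measurable, of constant sign and not a.e.\ zero on some open subset of $\mathbb{S}^{n-1}$. Such an $\Omega$ need not be bounded below on any cap: for instance $\Omega(\xi)=\mathrm{dist}(\xi,C)$ with $C$ a fat Cantor set in a spherical cap is nonnegative, positive on a dense open set, but vanishes on a dense set, hence has no cap on which it is $\ge c_0$. What one actually gets from the hypotheses is only a positive-measure level set $V:=\{\xi\in U_0:\Omega(\xi)>c_0\}$. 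The remedy (and what the cited reference does) is the Lebesgue-density argument: choose $\xi_0$ a density point of $V$, place $P$ so that for every $x\in Q$ the directions $(x-y)/|x-y|$, $y\in P$, sweep a cap of angular radius $\sim\sqrt{n}/k_0$ about $\xi_0$, and then take $k_0$ so large that, uniformly in $x\in Q$, the set of $y\in P$ with $(x-y)/|x-y|\in V$ has measure $\ge(1-\tfrac14)|P|$ (using that the map $y\mapsto(x-y)/|x-y|$ has bounded Jacobian on $P$ when $|x-y|\sim k_0 l_Q$). Intersecting with $F$ still leaves measure $\gtrsim|P|$, and on that intersection $\Omega\ge c_0$, which is what your final integral estimate actually needs. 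Once this replaces the cap assumption, the rest of your argument is sound. Under the Lipschitz hypothesis of Corollary~\ref{corollary, characterization-cp} your cap reduction is legitimate, so the gap is only for the bounded-measurable case of Theorem~\ref{theorem, necessity of cp}.
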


\begin{lemma}[upper estimates]\label{lemma, u-es}
Let $b\in \bmo$, $\Om\in L^{\infty}(\bbS^{n-1})$ and $\om\in \apn$.
For a given cube $Q$, denote by $F$ the set associated with $Q$ mentioned in Lemma \ref{lemma, l-es}.
Let $f:=(\int_{F}\om(x)dx)^{-1/p}\chi_F$.
Then, there exists a positive constant $\d$ such that
\be
\|T_{\Omega}^b(f)\|_{L_{\omega}^p(2^{d+1}Q\bs 2^dQ)}
\lesssim
2^{-\d dn/p}d\|b\|_{\bmo}.
\ee
for sufficient large $d$, where the implicit constant is independent of $d$ and $Q$.
\end{lemma}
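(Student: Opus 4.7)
My plan is to exploit the geometric separation between the annulus $2^{d+1}Q\setminus 2^dQ$ and the support of $f$. Since $f$ is supported on $F\subset P$ with $F$ of side length $\sim l_Q$ and $|c_P-c_Q|=k_0l_Q$, for $d$ sufficiently large and any $x\in 2^{d+1}Q\setminus 2^dQ$ and $y\in F$, one has $|x-y|\sim 2^dl_Q$. Using $\Omega\in L^\infty(\bbS^{n-1})$ in the integral representation
\[
T_\Omega^bf(x)=\int_F\frac{\Omega(x-y)}{|x-y|^n}(b(x)-b(y))f(y)\,dy,
\]
this yields the pointwise bound
\[
|T_\Omega^bf(x)|\lesssim \frac{1}{(2^dl_Q)^n}\int_F|b(x)-b(y)||f(y)|\,dy.
\]

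Next I would split
\[
|b(x)-b(y)|\le |b(x)-b_{2^{d+1}Q}|+|b_{2^{d+1}Q}-b_Q|+|b(y)-b_Q|,
\]
using the telescoping bound $|b_{2^{d+1}Q}-b_Q|\lesssim d\|b\|_{\bmo}$. Taking the $L^p_\omega$-norm over the annulus produces three contributions. The first is controlled by the weighted John--Nirenberg estimate $\|b-b_{2^{d+1}Q}\|_{L^p_\omega(2^{d+1}Q)}\lesssim \|b\|_{\bmo}\,\omega(2^{d+1}Q)^{1/p}$; the second directly gives $d\|b\|_{\bmo}\,\omega(2^{d+1}Q)^{1/p}$; and the third is handled by H\"older's inequality on $\int_F|b(y)-b_Q||f(y)|\,dy$ together with the observation $|b_F-b_Q|\lesssim\|b\|_{\bmo}$ (since $F$ is comparable in size to, and at bounded distance from, $Q$).

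Each of the three contributions collapses to essentially
\[
d\,\|b\|_{\bmo}\cdot\frac{\|f\|_{L^1(F)}\,\omega(2^{d+1}Q)^{1/p}}{(2^dl_Q)^n}.
\]
From $\|f\|_{L^p_\omega}=1$ one has $\|f\|_{L^1(F)}=|F|\omega(F)^{-1/p}$, and two applications of H\"older's inequality together with the $A_p$ condition (using also that $\omega^{1-p'}\in A_{p'}$ has the doubling property from Lemma \ref{lemma, property of weight}(iii)) bound this by $|Q|\omega(Q)^{-1/p}$. Hence the expression simplifies to
\[
d\,\|b\|_{\bmo}\cdot 2^{-dn}\Big(\frac{\omega(2^{d+1}Q)}{\omega(Q)}\Big)^{1/p}.
\]
The crucial last step is to invoke Lemma \ref{lemma, property of weight}(ii) to choose $\ep>0$ with $\omega\in A_{p-\ep}$, and then apply Lemma \ref{lemma, property of weight}(iii) at the exponent $p-\ep$ to obtain $\omega(2^{d+1}Q)/\omega(Q)\lesssim 2^{(d+1)n(p-\ep)}$. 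This produces the decay factor $2^{-dn\ep/p}$, proving the lemma with $\delta=\ep$.

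The main obstacle I anticipate is exactly this final step. A direct application of $A_p$-doubling would only give $\omega(2^{d+1}Q)/\omega(Q)\lesssim 2^{(d+1)np}$, which precisely cancels the geometric gain $2^{-dn}$ and yields no decay whatsoever. The self-improvement property $A_p\subset A_{p-\ep}$ is the sole mechanism producing a strict positive power $\delta$, and this is what the lemma ultimately depends on; the factor $d$ in the bound is the expected logarithmic loss coming from BMO telescoping across scales.
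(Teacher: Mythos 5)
Your argument is correct and is the natural proof of this lemma; note that the paper does not actually prove it, but imports it verbatim from \cite[Proposition 4.4]{GuoWuYang17Arxiv}, where the same route (geometric separation $|x-y|\sim 2^d l_Q$, a three-term BMO telescoping, weighted John--Nirenberg, and then the $A_p\subset A_{p-\ep}$ self-improvement to turn $\om(2^{d+1}Q)/\om(Q)\lesssim 2^{(d+1)n(p-\ep)}$ into the decay $2^{-dn\ep/p}$) is what is used. Two small points you gloss over but which are easy to repair: (i) $F$ is only a measurable subset of the cube $P$ with $|F|=|P|/2$, so the weighted John--Nirenberg estimate should be applied over $P$, and the passage from $P$ to $F$ uses $|b_F-b_P|\lesssim\|b\|_{\bmo}$ together with the lower bound $\om(F)\gtrsim\om(P)$ that follows from the $A_p$ condition (and likewise $\om^{1-p'}(F)\lesssim\om^{1-p'}(P)$); and (ii) the bound $\|f\|_{L^1(F)}\om(2^{d+1}Q)^{1/p}(2^dl_Q)^{-n}\lesssim 2^{-dn}\bigl(\om(2^{d+1}Q)/\om(Q)\bigr)^{1/p}$ requires $\om(F)\sim\om(Q)$, which again reduces to $\om(F)\sim\om(P)\sim\om(Q)$ since $Q$ and $P$ are congruent cubes at comparable distance $k_0 l_Q$; these comparabilities depend on $[\om]_{A_p}$ and $k_0$ but not on $Q$ or $d$, as required.
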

~\\
\textbf{Claim A:}
Under the assumptions of Theorem \ref{theorem, necessity of cp},
Lemma \ref{lemma, l-es} is also valid if we replace $T_{\Om}^b$
by $\tcalO(\calT_{\Om}^b)$ or by $\calV_{\r}(\calT_{\Om}^b)$.
\begin{proof}[Proof of Claim A]
 Arguing as in the proof of \cite[Proposition 4.2]{GuoWuYang17Arxiv}, we see that
 for any given cube $Q$,
 the sets $P$, $E$ and $F$ exist. Moreover, for $f:=(\int_{F}\om(x)dx)^{-1/p}\chi_F$, the following function
  \be
  (b(x)-b(y))\frac{\Om(x-y)}{|x-y|^n}f(y)
  \ee
  does not change sign on $E\times F$.
  Hence, for $x\in E$,
  \begin{align}\label{pf, 3}
  &\tcalO(\calT_{\Om}^b)(f)(x)
  \nonumber\\
& \quad=
  \left(\sum_{i=1}^{\infty}\sup_{t_{i+1}\leq \ep_{i+1}<\ep_i\leq t_i}\left|\int_{\ep_{i+1}<|x-y|\leq \ep_i}(b(x)-b(y))\frac{\Om(x-y)}{|x-y|^n}f(y)dy\right|^2\right)^{1/2}+|T_{\Om,t_1}^bf(x)|
  \nonumber\\
&\quad=
  \left(\sum_{i=1}^{\infty}\left|\int_{t_{i+1}<|x-y|\leq t_i}(b(x)-b(y))\frac{\Om(x-y)}{|x-y|^n}f(y)dy\right|^2\right)^{1/2}+|T_{\Om,t_1}^bf(x)|.
  \end{align}
Observe that for any $x\in E$, $y\in F$,
  \begin{align*}
  x-y\in E-F\subset Q-P  \subset \{x\in \bbR^n: |c_Q-c_P|/2\leq |x|\leq 2|c_Q-c_P|\},
  \end{align*}
where $Q_0$ is the cube centered at origin with side length 1.
  By this fact and the assumption
  $$\sup_{i\in \mathbb{Z}}|\{j: 2^i\leq |t_j|< 2^{i+1}\}|<\infty,$$
  there are only finite terms which are non-zero in the series in \eqref{pf, 3}.
  Thus,
  \begin{align*}
  \tcalO(\calT_{\Om}^b)(f)(x)
  = &
  \left(\sum_{i=1}^{\infty}\left|\int_{t_{i+1}<|x-y|\leq t_i}(b(x)-b(y))\frac{\Om(x-y)}{|x-y|^n}f(y)dy\right|^2\right)^{1/2}+|T_{\Om,t_1}^bf(x)|
  \\
  \sim &
  \sum_{i=1}^{\infty}\left|\int_{t_{i+1}<|x-y|\leq t_i}(b(x)-b(y))\frac{\Om(x-y)}{|x-y|^n}f(y)dy\right|+|T_{\Om,t_1}^bf(x)|
  \\
  \sim &
  \left|\int_{\bbR^n}(b(x)-b(y))\frac{\Om(x-y)}{|x-y|^n}f(y)dy\right|=|T_{\Om}^bf(x)|,
  \end{align*}
  and the implicit constant depends on $\{t_i\}_i$ but not on $x$, $Q$, $P$, $E$ and $F$. Therefore, by this fact and Lemma \ref{lemma, l-es}, \eqref{e-pointwise equivalence osci} with $T_{\Om}^b$
replaced by $\tcalO(\calT_{\Om}^b)$ holds.

On the other hand, for any $x\in E$,
\be
\begin{split}
\calV_{\r}(\calT_{\Om}^b)(f)(x)
= &
\sup_{\ep_{i}\downarrow 0}\left(\sum_{i=1}^{\infty}\left|\int_{\ep_{i+1}<|x-y|\leq \ep_i}(b(x)-b(y))\frac{\Om(x-y)}{|x-y|^n}f(y)dy\right|^{\r}\right)^{1/\r}
\\
\leq&
\left|\int_{\rn}(b(x)-b(y))\frac{\Om(x-y)}{|x-y|^n}f(y)dy\right|
= T_{\Omega}^{b}(f)(x)
\\
=&
\left|\int_{(k_0-\sqrt{n})l_Q< |x-y|\leq (k_0+\sqrt{n})l_Q}(b(x)-b(y))\frac{\Om(x-y)}{|x-y|^n}f(y)dy\right|
\leq  \calV_{\r}(T_{\Omega}^{b})(f)(x).
\end{split}
\ee
It follows that $\calV_{\r}(\calT_{\Om}^b)(f)(x)=T_{\Omega}^{b}(f)(x)$ for $x\in E$, which implies that \eqref{e-pointwise equivalence osci} with $T_{\Om}^b$
replaced by $\calV_{\r}(\calT_{\Om}^b)$ holds.
\end{proof}

We further have the following corollary directly follows from Lemma \ref{lemma, l-es} and Claim A.
\begin{corollary}\label{corollary, necessity of bd}
  Let $1<p<\infty$, $b\in L_{loc}^1(\mathbb{R}^n)$ and $\om\in \apn$.
  Let $\Om$ be a measurable function on $\bbS^{n-1}$, which does not change sign and is not equivalent to zero
  on some open subset of  $\bbS^{n-1}$.
  Then,
  \bn
  \item let $\{t_j\}_{j=1}^{\infty}$ be a sequence with $\sup_{i\in \mathbb{Z}}|\{j: 2^i\leq |t_j|\leq 2^{i+1}\}|<\infty$,
  then the $L^p_{\om}(\rn)$-boundedness of
  $\tcalO(\calT_{\Om}^b)$ implies $b\in \bmo$;
  \item the $L^p_{\om}(\rn)$-boundedness of
  $\calV_{\r}(\calT_{\Om}^b)$ implies $b\in \bmo$.
  \en
\end{corollary}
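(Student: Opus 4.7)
My plan is to derive the corollary directly from Claim A combined with the lower estimate of Lemma \ref{lemma, l-es} and the John-Str\"{o}mberg characterization \eqref{e-J-S equiv} of $\bmo$. Statements (1) and (2) follow by the same scheme, so I would write out (1) in detail and merely indicate the cosmetic change needed for (2). The key observation that makes the corollary essentially immediate is that Claim A says precisely that the lower bound \eqref{e-pointwise equivalence osci}, originally proved for $T_\Om^b$, continues to hold with $T_\Om^b$ replaced by $\tcalO(\calT_\Om^b)$ or by $\calV_\r(\calT_\Om^b)$.

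Fix an arbitrary cube $Q\subset\rn$. I would invoke Lemma \ref{lemma, l-es} to produce the companion cube $P$ and measurable sets $E\subset Q$, $F\subset P$, and set $f:=\bigl(\int_{F}\om(x)\,dx\bigr)^{-1/p}\chi_F$, so that by construction $\|f\|_{L^p_\om(\rn)}=1$. By Claim A, inequality \eqref{e-pointwise equivalence osci} remains valid with $T_\Om^b$ replaced by $\tcalO(\calT_\Om^b)$. Taking $B=\emptyset$ in that estimate gives
\[
C\,a_\lambda(b;Q)\leq \|\tcalO(\calT_\Om^b)(f)\|_{L^p_\om(E)}\leq \|\tcalO(\calT_\Om^b)(f)\|_{L^p_\om(\rn)}\leq \|\tcalO(\calT_\Om^b)\|_{L^p_\om\to L^p_\om},
\]
where the right-hand side is finite by the boundedness hypothesis and, crucially, independent of $Q$. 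Taking the supremum over all cubes $Q$ and recalling \eqref{e-J-S equiv}, I obtain $\|b\|_{\bmo}\sim\sup_Q a_\lambda(b;Q)<\infty$, that is, $b\in\bmo$. For (2) the argument is identical after replacing $\tcalO(\calT_\Om^b)$ by $\calV_\r(\calT_\Om^b)$ and using the corresponding half of Claim A.

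Since all the substantive work has already been carried out in Lemma \ref{lemma, l-es} and in the proof of Claim A (whose sign-preservation argument crucially uses that $\Om$ does not change sign on some open subset of $\bbS^{n-1}$), I do not anticipate any genuine obstacle in writing out the corollary. The only mild point requiring care is to verify that the constant $C$ in \eqref{e-pointwise equivalence osci} is truly uniform in $Q$ and in the auxiliary choices of $P$, $E$, $F$; this uniformity is already built into the formulation of Lemma \ref{lemma, l-es}, so the assembly of the pieces proceeds without further input.
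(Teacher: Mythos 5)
Your proof is correct and matches the paper's own (one-line) justification, which states that the corollary "directly follows from Lemma \ref{lemma, l-es} and Claim A"; you simply fill in the routine details (taking $B=\emptyset$, noting $\|f\|_{L^p_\om(\rn)}=1$, passing to the supremum over cubes, and invoking the John--Str\"omberg equivalence \eqref{e-J-S equiv}). Nothing is missing and no different route is taken.
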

Moreover, by combining Corollary \ref{corollary, necessity of bd}, \cite[Corollary 1.4]{MaTorreaXu2017SCM} and the boundedness of $T_{\Omega,t}^{b}$, we have
another corollary on the characterization of bounded $\tcalO(\calT_{\Om}^b)$ and $\calV_{\r}(\calT_{\Om}^b)$.

\begin{corollary}\label{corollary, characterization-bd}
  Let $1<p<\infty$, $b\in L_{loc}^1(\mathbb{R}^n)$  and $\om\in \apn$, $\Om\in Lip_1(\bbS^{n-1})$ and $\Omega\not\equiv0$.
  Let $\{t_j\}_{j=1}^{\infty}$ be a sequence with $\sup_{i\in \mathbb{Z}}|\{j: 2^i\leq |t_j|<2^{i+1}\}|<\infty$ in the definition of $\tcalO$.
  Then
  \bn
  \item $b\in \bmo\Longleftrightarrow \tcalO(\calT_{\Om}^b)$ is bounded on $L^p_{\om}(\rn)$;
  \item $b\in \bmo\Longleftrightarrow \calV_{\r}(\calT_{\Om}^b)$ is bounded on $L^p_{\om}(\rn)$.
  \en
\end{corollary}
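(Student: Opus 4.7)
The plan is to prove each equivalence by treating the two implications separately, relying on results already assembled in the excerpt together with the cited weighted bounds for oscillation and variation of singular integrals.

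For the necessity direction ($\Longleftarrow$), I would reduce to Corollary \ref{corollary, necessity of bd}. The hypotheses of that corollary require $\Omega$ to be measurable on $\bbS^{n-1}$, not identically zero on some open subset, and not sign-changing there. Since $\Omega\in Lip_1(\bbS^{n-1})$ with $\Omega\not\equiv0$, continuity produces a point $x'_0\in\bbS^{n-1}$ where $\Omega(x'_0)\neq 0$, and by continuity $\Omega$ keeps the sign of $\Omega(x'_0)$ on an open neighborhood. Moreover $\Omega$ is bounded on $\bbS^{n-1}$ since it is Lipschitz on a compact set. Thus the hypothesis of Corollary \ref{corollary, necessity of bd} is met, and the $L^p_\omega(\rn)$-boundedness of $\tcalO(\calT_\Omega^b)$ (respectively $\calV_\r(\calT_\Omega^b)$) forces $b\in\bmo$ in both statements (1) and (2).

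For the sufficiency direction ($\Longrightarrow$), I would combine Lemma \ref{lemma, bd} with the weighted boundedness results for the oscillation and variation of the singular integral $T_\Omega$ itself. Under $\Omega\in Lip_1(\bbS^{n-1})$ and $\omega\in\apn$, \cite[Corollary 1.4]{MaTorreaXu2017SCM} gives the $L^p_\omega(\rn)$-boundedness of $\calO(\calT_\Omega)$ and $\calV_\r(\calT_\Omega)$. Feeding this into Lemma \ref{lemma, bd} then yields the $L^p_\omega(\rn)$-boundedness of $\calO(\calT_\Omega^b)$ and $\calV_\r(\calT_\Omega^b)$ for $b\in\bmo$, with operator norm controlled by $\|b\|_\bmo$. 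This already settles (2) of the corollary. For (1) the only extra ingredient is the additional term $|T_{\Omega,t_1}^b f(x)|$ appearing in the definition of $\tcalO$: since $T_{\Omega,t_1}$ is a truncated homogeneous singular integral, its commutator with $b\in\bmo$ is $L^p_\omega(\rn)$-bounded by the standard Coifman--Rochberg--Weiss/Alvarez--Bagby--Kurtz--P\'erez weighted commutator theorem, with norm uniformly controlled in $t_1$. Adding the two pieces gives the $L^p_\omega(\rn)$-boundedness of $\tcalO(\calT_\Omega^b)$.

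I expect no serious obstacle here; the content of the corollary is essentially bookkeeping once Lemma \ref{lemma, bd}, Corollary \ref{corollary, necessity of bd}, and the Ma--Torrea--Xu weighted bound are available. The only mildly delicate point is to note that the extra term in $\tcalO$ truly costs nothing: it is a single truncated commutator whose weighted $L^p$ bound is classical and independent of the sequence $\{t_j\}$, so the sharpened ``modified'' oscillation is controlled by the original oscillation plus a commutator that fits the same Muckenhoupt framework. Once this is recorded, the four implications are immediate and the corollary follows.
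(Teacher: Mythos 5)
Your proposal is correct and follows essentially the same route as the paper, which dispatches the corollary in one line by combining Corollary~\ref{corollary, necessity of bd}, \cite[Corollary 1.4]{MaTorreaXu2017SCM}, Lemma~\ref{lemma, bd}, and the weighted $L^p$-boundedness of the truncated commutator $T_{\Omega,t_1}^b$. Your elaboration (checking that $\Omega\in Lip_1(\bbS^{n-1})$ with $\Omega\not\equiv 0$ fulfils the sign and non-vanishing hypotheses of Corollary~\ref{corollary, necessity of bd}, and noting that the extra term $|T_{\Omega,t_1}^b f|$ in $\tcalO$ is handled by the classical weighted commutator bound) is accurate bookkeeping that the paper leaves implicit.
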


\textbf{Claim B:}
Lemma \ref{lemma, u-es} is also valid if we replace $T_{\Om}^b$
by $\tcalO(\calT_{\Om}^b)$ or by $\calV_{\r}(\calT_{\Om}^b)$.
\begin{proof}[Proof of Claim B]
It follows from the definitions of $\tcalO(\calT_{\Om}^b)$ and $\calV_{\r}(\calT_{\Om}^b)$ that for any cube $Q$, $d\in\mathbb N$ large enough,
$x\in 2^{d+1}Q\bs 2^dQ$ and the function $f:=(\int_{F}\om(x)dx)^{-1/p}\chi_F$ satisfies
\be
\tcalO(\calT_{\Om}^b)(f)(x), \calV_{\r}(T)(f)(x)\leq\int_{\bbR^n}\left|(b(x)-b(y))\frac{\Om(x-y)}{|x-y|^n}f(y)\right|dy.
\ee
Then arguing as in the proof of \cite[Proposition 4.4]{GuoWuYang17Arxiv}, we have that
\be
\left\|\int_{\bbR^n}\left|(b(x)-b(y))\frac{\Om(x-y)}{|x-y|^n}f(y)\right|dy
\right\|_{L_{\omega}^p(2^{d+1}Q\bs 2^dQ)}
\lesssim
2^{-\d dn/p}d\|b\|_{\bmo},
\ee
where the implicit constant is independent of $d$ and $Q$. Then the desired conclusion follows.
\end{proof}
\begin{proof}[Proof of Theorem \ref{theorem, necessity of cp}]
Using Lemma \ref{lemma, new cmo}, Claim A and B,  the proof of Theorem \ref{theorem, necessity of cp}
is just a repetition of the proof of \cite[Theorem 1.4]{GuoWuYang17Arxiv}.
We omit the details here.
\end{proof}
\begin{proof}[Proof of Corollary \ref{corollary, characterization-cp}]
  By a similar argument as in the proof of (2) of Theorem \ref{theorem, compactness},
  one can verify that $T_{\Omega,t_1}^b$ is compact on $L^p_{\om}(\rn)$.
  Then, the sufficiency follows from \cite[Corollary 1.4]{MaTorreaXu2017SCM},
  Theorem \ref{theorem, compactness} and the compactness of $T_{\Omega,t_1}^b$.
  The necessity follows from Theorem \ref{theorem, necessity of cp}.
\end{proof}

\appendix
\section{}

In this section, we give an example of oscillation of $\calO(\calT_{\Omega}^b)$
 such that $b\notin \cmo$ and $\calO(\calT_{\Omega}^b)$ is compact on $L^p_{\om}(\rn)$.

 To begin with, take $b$ to be a smooth function on $\rn$ such that
\be
b(x):=
\begin{cases}
  0  &|x|\leq 1;
  \\
  |x|^{1/2}, &|x|\geq 2.
\end{cases}
\ee
One can check that $b\notin \bmo$ by
\be
\begin{split}
  \lim_{r\rightarrow \infty}\frac{1}{r^n}\int_{[0,r]^n}|b(y)-b_{[0,r]^n}|dy
  =\infty.
\end{split}
\ee
However, assume $\Om\in Lip(\rn)$. We find that $\calO(\calT_{\Omega}^b)$ is a compact operator on $L^p_{\om}(\rn)$.

In fact, let $\varphi$ be a smooth bump function with $0\leq \va\leq 1$, supported in the ball $%
\{\xi: |\xi|<2\}$ and be equal to 1 on the ball $\{\xi: |\xi|\leq
1\}$. For any positive number $N\in \mathbb N$ we define $\va_N(x):=\va(x/N)$.
Define
\be
\calO_N(\calT_{\Omega}^b):=\va_N\calO(\calT_{\Omega}^b).
\ee
We claim that $\{\calO_N(\calT_{\Omega}^b)\}_{N\in \mathbb{N}}$ is a sequence of compact operators on $L^p_{\om}(\rn)$.
In fact, for any fixed $N$, we have
\be
b(x)-b(y)=b(x)\va_{2N+t_1}(x)-b(y)\va_{2N+t_1}(y)=:b_{2N+t_1}(x)-b_{2N+t_1}(y)
\ee
for every $x,y$ such that $\va_N(x)\neq 0$ and $|x-y|\leq t_1$.
From this and the definition of $\calO_N(\calT_{\Omega}^b)$, we have
\begin{align*}
  \calO_N(\calT_{\Omega}^b)(f)(x)
  &=
  \va_N(x)\left(\sum_{i=1}^{\infty}\sup_{t_{i+1}\leq \ep_{i+1}<\ep_i\leq t_i}\left|\int_{\ep_{i+1}<|x-y|\leq \ep_i}(b(x)-b(y))\frac{\Om(x-y)}{|x-y|^n}f(y)dy\right|^2\right)^{1/2}
  \\
  &=
  \va_N(x)\Big(\sum_{i=1}^{\infty}\sup_{t_{i+1}\leq \ep_{i+1}<\ep_i\leq t_i}\Big|\int_{\ep_{i+1}<|x-y|\leq \ep_i}(b_{2N+t_1}(x)-b_{2N+t_1}(y))\\
  &\quad\times\frac{\Om(x-y)}{|x-y|^n}f(y)dy\Big|^2\Big)^{1/2}
  \\
  &=\va_N(x)\calO(\calT_{\Omega}^{b_{2N+t_1}})(f)(x).
\end{align*}
Observe that $b_{2N+t_1}\in C_c^{\infty}(\rn)\subset \cmo$.
Then the compactness of $\calO(\calT_{\Omega}^{b_{2N+t_1}})$ follows from Theorem \ref{theorem, compactness}.
Since $\va_N$ is a bounded operator on $L^p_{\om}(\rn)$ as a pointwise multiplier, the operator
$\calO_N(\calT_{\Omega}^b)=\va_N(x)\calO(\calT_{\Omega}^{b_{2N+t_1}})$, as the product of a bounded operator and a compact operator, is also compact on $L^p_{\om}(\rn)$.

Finally, we claim that $\calO(\calT_{\Omega}^b)$ is the limit of $\calO_N(\calT_{\Omega}^b)$ in the sense
of operator norm, as $N\rightarrow \infty$. Then the compactness of $\calO(\calT_{\Omega}^b)$ follows.

Write
\begin{align*}
  |\calO(\calT_{\Omega}^b)(f)(x)-\calO_N(\calT_{\Omega}^b)(f)(x)|
  =
  (1-\va_N(x))\calO(\calT_{\Omega}^b)(f)(x).
\end{align*}
Denote by $t_1$ the first term of the sequence $\{t_j\}_{j=1}^{\infty}$ in the definition of $\calO(\calT_{\Omega}^b)$.
By the definition of $b$ and the mean value theorem, we have
\be
(1-\va_N(x))|b(x)-b(y)|
\leq
(1-\va_N(x))\sup_{y\in B(x,t_1)}|\nabla b(y)|\cdot |x-y|
\lesssim N^{-1/2}|x-y|
\ee
for all $|x-y|\leq t_1$.

Hence,
\begin{align*}
&(1-\va_N(x))\calO(\calT_{\Omega}^b)(f)(x)
\\
&\quad\lesssim
N^{-1/2}\left(\sum_{i=1}^{\infty}\sup_{t_{i+1}\leq \ep_{i+1}<\ep_i\leq t_i}\left|\int_{\ep_{i+1}<|x-y|\leq \ep_i}|x-y|\frac{|\Om(x-y)|}{|x-y|^n}|f(y)|dy\right|^2\right)^{1/2}
\\
&\quad\lesssim
N^{-1/2}\int_{|x-y|\leq t_1}\frac{|f(y)|}{|x-y|^{n-1}}dy\lesssim N^{-1/2}M(f)(x).
\end{align*}
It follows that
\be
\|(1-\va_N)\calO(\calT_{\Omega}^b)(f)\|_{L^p_{\om}(\rn)\rightarrow L^p_{\om}(\rn)}\lesssim N^{-1/2}\|M(f)\|_{L^p_{\om}(\rn)\rightarrow L^p_{\om}(\rn)}\lesssim N^{-1/2}\rightarrow 0,
\ee
as $N\rightarrow \infty$. We have now completed this proof.

\end{document}